\setlist{itemsep=2pt,parsep=0pt,topsep=2pt,labelsep=.5em,leftmargin=\parindent}
\titleformat{\section}[block]
{\normalfont\Large\filcenter\bfseries}{\thesection.}{.33em}{}
\titleformat{\subsection}[runin]
{\normalfont\normalsize\bfseries}{\thesubsection.}{.33em}{}[.]
\theoremstyle{plain}
\newtheorem{theorem}{Theorem}
\newtheorem{lemma}[theorem]{Lemma}
\newtheorem{proposition}[theorem]{Proposition}
\newtheorem*{scorollary}{Corollary}
\theoremstyle{definition}
\newtheorem{example}[theorem]{Example}
\theoremstyle{remark}
\newcommand{\mcC}{\mathcal{C}}
\newcommand{\mcD}{\mathcal{D}}
\newcommand{\mcI}{\mathcal{I}}
\newcommand{\mcM}{\mathcal{M}}
\newcommand{\mcN}{\mathcal{N}}
\newcommand{\mcP}{\mathcal{P}}
\newcommand{\mcQ}{\mathcal{Q}}
\newcommand{\mcR}{\mathcal{R}}
\newcommand{\mcV}{\mathcal{V}}
\newcommand{\bbZ}{\mathbb{Z}}
\newcommand{\sfB}{\mathsf{B}}
\newcommand{\sfC}{\mathsf{C}}
\newcommand{\sfW}{\mathsf{W}}
\newcommand{\cosimp}{\mathsf{c}}
\newcommand{\simp}{\mathsf{s}}
\newcommand{\Ch}{\mathsf{Ch}}
\newcommand{\Set}{\mathsf{Set}}
\newcommand{\sSet}{\simp\Set}
\newcommand{\bbone}{\mathsf I}
\newcommand{\colim}{\operatorname{colim}}
\newcommand{\const}{\operatorname{cst}}
\newcommand{\Ho}{\operatorname{Ho}}
\newcommand{\id}{\operatorname{id}}
\newcommand{\Id}{\operatorname{Id}}
\newcommand{\pcm}{\operatorname{pcm}}
\newcommand{\sk}{\operatorname{sk}}
\newcommand{\ra}{\rightarrow}
\newcommand{\simarrow}{\mathbin{\:\!\!\xymatrix@1@C=15pt{{}\ar[r]^{\sim} & {}}}}
\renewcommand{\rightarrowtail}[1][]{\mathbin{\:\!\!\xymatrix@1@C=15pt{{}\ar@{ >->}[r]^{#1} & {}}}}
\renewcommand{\deg}{\mathrm{deg}}
\newcommand{\op}{\mathrm{op}}
\newcommand{\nd}{\mathrm{nd}}
\newcommand{\nmono}{\mathrm{n\pm}}
\newcommand{\nneg}{\mathrm{n-}}
\newcommand{\npos}{\mathrm{n+}}
\newcommand{\pt}{\mathrm{pt}}
\newcommand{\Reedy}{\mathrm{R}}
\newcommand{\col}{\!:\:\!}
\newcommand{\loc}{\!\::\!}
\newcommand{\lra}{\longrightarrow}
\newcommand{\xra}[1]{\xrightarrow{#1}}
\newcommand{\xla}[1]{\xleftarrow{#1}}
\newcommand{\xlra}[1]{\xrightarrow{\ #1\ }}
\newcommand{\Ra}{\Rightarrow}
\newcommand{\hra}{\hookrightarrow}
\newcommand{\xymatrixc}[2][]{\xy *!C\xybox{\xymatrix#1{#2}}\endxy}
\newcommand{\xymatrixb}[2]{\xy*i\xybox{\xymatrix{#2}};p!D*!D\xybox{\xymatrix{#1}}\endxy}
\newcommand{\topbox}[3]{\xybox{*!{\hphantom{#1}\phantom{#2}\hphantom{#3}},c*h!{#1#2#3}} \save *{\vphantom{{#1}{#2}{#3}}} \restore}
\newcommand{\rbox}[2]{\drop+<.5pc>!!<0pt,\the\fontdimen22\textfont2>{\vphantom{#1#2}}\drop\xybox{*i+<.5pc>!!<0pt,\the\fontdimen22\textfont2>{#2},c+R*+<.5pc>!!R!<0pt,\the\fontdimen22\textfont2>{\hphantom{#1}#2},c*h+<.5pc>!!R!<0pt,\the\fontdimen22\textfont2>{#1#2}}}
\newcommand{\cbox}[1]{\drop+<.5pc>!!<0pt,\the\fontdimen22\textfont2>{#1}}
\newcommand{\lcrbox}[3]{\drop+<.5pc>!!<0pt,\the\fontdimen22\textfont2>{\vphantom{#1#2#3}}\drop\xybox{*+<.5pc>!!<0pt,\the\fontdimen22\textfont2>{#2};p+R*+<.5pc>!!R!<0pt,\the\fontdimen22\textfont2>{\hphantom{#1#2}},
c*h+<.5pc>!!R!<0pt,\the\fontdimen22\textfont2>{#1\phantom{#2}},
p+L*+<.5pc>!!L!<0pt,\the\fontdimen22\textfont2>{\hphantom{#2#3}},
c*h+<.5pc>!!L!<0pt,\the\fontdimen22\textfont2>{\phantom{#2}#3}}}
\newcommand{\leftbox}[2]{{}\phantom{#1} \save []+L*+<.5pc>!!<0pt,\the\fontdimen22\textfont2>!L{#1#2} \restore}
\newcommand{\rightbox}[2]{{}\phantom{#2} \save []+R*+<.5pc>!!<0pt,\the\fontdimen22\textfont2>!R{#1#2} \restore}
\newcommand{\pbsize}{15pt}
\newcommand{\pboff}{.5}
\newcommand{\poxy}{\save +<-\pbsize,0pt>;p+<0pt,\pbsize>="a"**{}?(\pboff);"a"**\dir{-};p+<\pbsize,0pt>;**{}?(\pboff);"a"**\dir{-} \restore}
\newcommand{\po}{\poxy}
\theoremstyle{plain}
\newtheorem{atheorem}{Theorem}
\begin{document}

\title{Enriched cofibration categories\thanks{%
The research was supported by the grant P201/11/0528 of the Czech Science Foundation (GA \v CR).\newline
\emph{2010 Mathematics Subject Classification}. Primary 55U35; Secondary 18D20.\newline
\emph{Key words and phrases}. cofibration category, diagram category, weighted colimit.
}}
\author{Luk\'a\v{s} Vok\v{r}\'inek}
\date{\today}
\maketitle

\begin{abstract}
Cofibration categories are a formalization of homotopy theory useful for dealing with homotopy colimits that exist on the level of models as colimits of cofibrant diagrams. In this paper, we deal with their enriched version. Our main result claims that the category $[\mcC,\mcM]$ of enriched diagrams equipped with the projective structure inherits a structure of a cofibration category whenever $\mcC$ is locally cofibrant (or, more generally, locally flat).
\end{abstract}

\section{Introduction}

A cofibration category $\mcM$ is a category equipped with a collection of cofibrations and weak equivalences in such a way that it is possible to construct all homotopy colimits on the level of $\mcM$ (rather than $\Ho\mcM$). It is known that the category $[\mcC,\mcM]$ of diagrams possesses the so-called pointwise cofibration structure -- both cofibrations and weak equivalences are pointwise. In this paper, we improve on this result in two different ways:
\begin{itemize}
\item
	we endow the diagram category with a stronger projective structure and show that the colimit functor is left Quillen;
\item
	we consider both $\mcC$ and $\mcM$ enriched over a monoidal category $\mcV$ and $[\mcC,\mcM]$ now denotes the category of enriched diagrams; the same is then true provided that $\mcC$ is ``locally flat'' (in classical enriched homotopy theory, locally cofibrant would be sufficient).
\end{itemize}

We provide an example with $\mcV = \Ch_\bbZ$, the category of chain complexes, and $\mcC$ non-locally flat for which $[\mcC,\mcM]$ is \emph{not} a cofibration category. The only problem lies in the cofibrant replacement in this category. If a functorial (in the enriched sense) cofibrant replacement existed in $\Ch_\bbZ$, then one could produce a cofibrant replacement in $[\mcC,\mcM]$ for arbitrary $\mcC$. Thus, our example could be reinterpreted as the non-existence of a functorial cofibrant replacement in $\Ch_\bbZ$. This is in strong contrast with the nowadays frequent assumption of non-enriched model categories possessing a functorial cofibrant replacement (factorization).

Namely, in the category $\Ch_\bbZ$ of chain complexes over $\bbZ$ with the projective model structure, consider the object $\bbZ/2$ which clearly satisfies $2\cdot\id_{\bbZ/2}=0$. If there should be a cofibrant replacement dg-functor, it would have to be additive and, in particular, if $P$ is its value on $\bbZ/2$, then necessarily $2\cdot\id_P=0$. Since every cofibrant object is also torsion-free, we conclude that $P=0$ and thus, $P$ might not be a replacement of $\bbZ/2$. We will see shortly that this happens precisely because $\Ch_\bbZ(\bbZ/2,\bbZ/2)$ is not cofibrant.

There are many choices of cofibrations in $[\mcC,\mcM]$. The most obvious is the class of pointwise cofibrations, i.e.\ natural transformations $f \colon X \to Y$ such that each $f_c \colon Xc\ra Yc$ is a cofibration. Together with pointwise weak equivalences, these form the pointwise structure on $[\mcC,\mcM]$ that we denote $[\mcC,\mcM]_\pt$. However, this class is too big for functors like the colimit to preserve cofibrations. In the presense of fibrations, another possibility would be to consider all maps that have a left lifting property with respect to trivial fibrations. Again, it is very hard to verify whether the colimit functor preserves cofibrations since we will not assume in general that cofibrations in $\mcM$ are characterized by the lifting property.

Since we are interested mainly in colimits/left Kan extensions, the above choices are inappropriate. A \emph{(projective) cofibration} $f \colon X\ra Y$ in $[\mcC,\mcM]$ is a map that lies in the closure of
\[\{\mcC(c,-)\otimes M\ra\mcC(c,-)\otimes N\ |\ \textrm{$c\in\mcC$ and $M\ra N$ a cofibration in $\mcM$}\}\]
under coproducts, cobase change and transfinite composites. When we speak about cofibrations (without any adjective) in diagram categories, we will always mean projective cofibrations.

We are now almost ready to state our main theorem. We leave certain notions undefined for now, but remark that an important example is that of a monoidal model category $\mcV$, a model $\mcV$-category $\mcM$ and a locally cofibrant $\mcV$-category $\mcC$.

\begin{atheorem}\label{t:projective_structure}
Given a cofibration $\mcV$-category $\mcM$ and a small locally flat $\mcV$-category $\mcC$, the diagram category $[\mcC,\mcM]$ endowed with projective cofibrations and pointwise weak equivalences becomes a cofibration $\mcV$-category.
\end{atheorem}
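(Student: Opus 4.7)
The plan is to verify the axioms of a cofibration $\mcV$-category for $[\mcC,\mcM]$ one by one, observing that many of them are inherited pointwise from $\mcM$ or follow formally from the fact that projective cofibrations are defined as the saturation of the generating class $\mcI = \{\mcC(c,-)\otimes M \ra \mcC(c,-)\otimes N\}$ under coproducts, cobase change, and transfinite composition. The two-out-of-three property for weak equivalences and the fact that every isomorphism is a trivial cofibration are both pointwise and reduce to the corresponding statements in $\mcM$. Closure of projective cofibrations under composition, pushout, and transfinite composition is built into the saturation definition, and pushouts along projective cofibrations exist because pushouts of the generating maps exist (each $\mcC(c,-)\otimes(-)$ being a left adjoint) and these pushouts propagate through the saturation construction.

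The first genuinely nontrivial point, and where the local flatness hypothesis enters, is the stability of projective trivial cofibrations under pushout. I would begin with the observation that if $M \ra N$ is a trivial cofibration in $\mcM$, then the generating map $\mcC(c,-)\otimes M \ra \mcC(c,-)\otimes N$ is a pointwise trivial cofibration: its evaluation at $c'$ is $\mcC(c,c')\otimes M \ra \mcC(c,c')\otimes N$, which is a trivial cofibration in $\mcM$ exactly because $\mcC(c,c')$ is flat, i.e., tensoring with it preserves trivial cofibrations. The saturation of such maps then consists of pointwise trivial cofibrations, because pointwise trivial cofibrations in $\mcM$ are themselves closed under coproducts, cobase change, and transfinite composition. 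A standard cellular-approximation argument reduces arbitrary projective trivial cofibrations to this generating case and yields pushout stability.

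The hardest axiom is factorization: every morphism $f \col X \ra Y$ must admit a factorization as a projective cofibration followed by a pointwise weak equivalence. My plan is a transfinite small-object-style argument in which, at each successor stage, one attaches cells $\mcC(c,-)\otimes N$ along $\mcC(c,-)\otimes M$ in order to fill the gap between the current stage $Z_\alpha$ and $Y$; the choice of cells is driven by the cofibration/weak-equivalence factorizations that $\mcM$ itself provides pointwise, lifted through the adjunction between $\mcC(c,-)\otimes(-)$ and evaluation at $c$. Limit stages take colimits, and the eventual colimit $Z \ra Y$ is a pointwise weak equivalence by construction, using once more that the cells attached along the way produce pointwise weak equivalences after evaluation — which is exactly the content of local flatness. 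I expect this to be the main obstacle, as the example with $\Ch_\bbZ$ and $\bbZ/2$ from the introduction demonstrates: without local flatness on $\mcC$, cells of the form $\mcC(c,-)\otimes(M \ra N)$ fail to evaluate to weak equivalences, the construction breaks down, and one cannot even produce a cofibrant replacement. Cofibrant replacement itself is then simply the instance $\emptyset \ra X$ of this factorization.
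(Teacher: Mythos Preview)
Your handling of the ``easy'' axioms is broadly right and matches the paper's one-line remark that ``all axioms are verified pointwise except for the factorization.'' One caution: your claim that ``a standard cellular-approximation argument reduces arbitrary projective trivial cofibrations to this generating case'' is exactly what the paper flags as unknown in general cofibration categories (see the discussion preceding Theorem~B). Fortunately you do not need it: since $\mcC$ is locally flat, every projective cofibration is a \emph{pointwise} cofibration, so a projective trivial cofibration is a pointwise trivial cofibration, and pushouts of those are computed pointwise in $\mcM$. That gives pushout stability without any generation statement.

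The real gap is factorization. Your ``small-object-style'' sketch does not work in this setting, for two reasons. First, in a cofibration category there is no lifting characterization of weak equivalences and no smallness hypothesis, so there is no mechanism that tells you when you have attached enough cells for $Z\to Y$ to become a pointwise weak equivalence; the process has no stopping criterion. Second, and more structurally, attaching a cell $\mcC(c,-)\otimes(M\to N)$ changes the value of the diagram at \emph{every} object $c'$ with $\mcC(c,c')\neq 0$, so a construction ``driven by pointwise factorizations in $\mcM$'' cannot stabilize object by object: fixing things at $c$ perturbs what you already achieved at $c'$. Local flatness only tells you that $\mcC(c,c')\otimes-$ preserves (trivial) cofibrations; it does not force the map to $Y$ to become a weak equivalence after any transfinite stage.

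The paper resolves this by first treating the case where $\mcC$ is a locally flat \emph{direct} $\mcV$-category: there one can induct on degree, since attaching cells at objects of degree $n$ leaves lower degrees untouched, and the construction does converge. For a general $\mcC$ the paper introduces a direct replacement $\Delta\mcC$ (objects are finite sequences $(c_0,\ldots,c_n)$, degree $n$) together with a $\mcV$-functor $P\colon\Delta\mcC\to\mcC$. One factors $P^*f$ in $[\Delta\mcC,\mcM]$ using the direct case, then pushes the factorization forward along $P_!$. The substantial work---and the genuine idea you are missing---is proving that $P_!$ sends the resulting weak equivalence to a pointwise weak equivalence; this requires an analysis via auxiliary comma-type categories $\Delta_c\mcC$ and $\Delta_c^\top\mcC$ and a cofinality/extra-degeneracy argument. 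None of this is visible in a naive small-object approach.
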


To some extent, it seems a bit unfortunate that we do not have a description of trivial projective cofibrations via a generating collection as for projective cofibrations. However, in the general context of cofibration categories, when trivial cofibrations are not determined by lifting properties, it might possibly happen that such a description is not true. On the other hand, we were able to provide a description of weak equivalences between cofibrant objects.

\begin{atheorem}\label{t:weak_equivalences_generation}
Let $\mcC$ be a small locally flat $\mcV$-category. Then the class of weak equivalences between cofibrant objects in $[\mcC,\mcM]$ is the smallest class of morphisms between cofibrant objects which
\begin{itemize}
\item contains all morphisms $\mcC(c,-)\otimes M\ra\mcC(c,-)\otimes N$ with $M\ra N$ a weak equivalence between cofibrant objects in $\mcM$,
\item satisfies the 2-out-of-3 property and
\item is closed under ``homotopy invariant'' colimits:
\begin{itemize}
\item if $f \colon X \to Y$ is a natural transformation between spans $X_1 \xla{x_1} X_0 \xra{x_2} X_2$, $Y_1 \xla{y_1} Y_0 \xra{y_2} Y_2$ of cofibrant objects with one of the maps $x_1$, $x_2$ and one of the maps $y_1$, $y_2$ a cofibration, and if the components of $f$ lie in the class then so does the map of colimits.
\item if $f \colon X \to Y$ is a natural transformation between cofibrant chains (all objects cofibrant, all maps cofibrations), whose components lie in the class then so does the map of colimits.
\end{itemize}
\end{itemize}
\end{atheorem}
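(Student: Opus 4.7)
For the direction that weak equivalences between cofibrant objects contain the smallest class $\mathcal{W}$, the plan is to verify that the former satisfies the three closure properties. The generating maps $\mcC(c,-)\otimes M\to\mcC(c,-)\otimes N$ are pointwise weak equivalences because local flatness of $\mcC$ ensures that the tensor $\mcC(c,-)\otimes -$ sends weak equivalences between cofibrants in $\mcM$ to pointwise weak equivalences (a property exploited already in the proof of Theorem~\ref{t:projective_structure}); 2-out-of-3 is a cofibration category axiom; and closure under the two colimit operations is the standard invariance/gluing lemma for homotopy pushouts of Reedy-cofibrant spans and for sequential colimits of cofibrant chains, valid in the cofibration $\mcV$-category $[\mcC,\mcM]$ provided by Theorem~\ref{t:projective_structure}.

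For the reverse inclusion, the plan is a Brown's-lemma-style reduction. Given a weak equivalence $f\colon X\to Y$ between cofibrant objects, factor the map $X\sqcup Y\to Y$ determined by $f$ and $\id_Y$ as a cofibration $X\sqcup Y\hookrightarrow Z$ followed by a weak equivalence $p\colon Z\to Y$. Since $X,Y$ are cofibrant, the summand inclusions into the coproduct are cofibrations, so the component maps $i\colon X\to Z$ and $s\colon Y\to Z$ are cofibrations, and by 2-out-of-3 applied to $p\circ i=f$ and $p\circ s=\id_Y$ they are trivial cofibrations between cofibrant objects. Two applications of 2-out-of-3 inside $\mathcal{W}$ (first to $p\circ s=\id_Y$ to deduce $p\in\mathcal{W}$, then to $f=p\circ i$) then reduce the problem to (a) every identity on a cofibrant object lies in $\mathcal{W}$ and (b) every trivial cofibration between cofibrant objects lies in $\mathcal{W}$. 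Claim (a) is proved by transfinite induction over a cellular presentation of the cofibrant object: the base case $\id_{\mcC(c,-)\otimes M}=\mcC(c,-)\otimes\id_M$ is a generator (take the weak equivalence to be $\id_M$), successor stages use the span closure applied to the natural transformation of identity spans with the attaching cell as the cofibration leg, and limit stages use the cofibrant chain closure.

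The main obstacle is claim (b). A trivial cofibration $i\colon X\to Y$ is, by definition, a transfinite composite of pushouts along generating cofibrations $\mcC(c,-)\otimes(M\hookrightarrow N)$, but the individual cell attachments need not themselves be weak equivalences, so the naive filtration cannot be fed into the cofibrant chain closure. The plan is to use the hypothesis that $i$ is a weak equivalence to produce a \emph{refined} cellular presentation $X=Y_0\to Y_1\to\cdots\to Y$ in which each successor step $Y_\alpha\to Y_{\alpha+1}$ is expressible as a pushout of a generating \emph{weak equivalence} $\mcC(c,-)\otimes(M\to N)$, with $M\to N$ a weak equivalence between cofibrants in $\mcM$. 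Concretely, at each stage one pairs the original cell with a trivializing cell, constructed via a factorization in the cofibration $\mcV$-category $[\mcC,\mcM]$ that uses the weak equivalence property of the map into $Y$. Once this refinement is in place, the span closure applied to each successor step together with the cofibrant chain closure assembling the transfinite composite places $i$ in $\mathcal{W}$. The technical heart of the proof is carrying out this refinement coherently through the transfinite recursion, and this is where the local flatness of $\mcC$ will enter in an essential way through the preservation properties of the enriched tensor functors.
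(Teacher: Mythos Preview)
Your easy direction and the Brown's-lemma reduction to Claims~(a) and~(b) are fine, and Claim~(a) works as you say. The genuine gap is Claim~(b): you propose to refine an arbitrary trivial cofibration $i\colon X\to Y$ into a transfinite composite of pushouts of generators $\mcC_c\otimes(M\to N)$ with $M\to N$ a \emph{weak equivalence} between cofibrants. But for the span closure in the statement to apply at a successor stage, one leg of the span must be a cofibration; so in fact you need each refined cell $\mcC_c\otimes(M\to N)$ to have $M\to N$ a \emph{trivial cofibration}. In other words, your plan amounts to showing that every trivial cofibration between cofibrant objects is what the paper calls a \emph{good trivial cofibration}. This is exactly what the paper singles out as possibly false in a general cofibration category (since trivial cofibrations are not characterized by lifting properties), and your sketch of ``pairing each cell with a trivializing cell via a factorization using the map into $Y$'' does not explain how to overcome this: the intermediate maps $Y_\alpha\to Y$ in a cellular presentation of $i$ need not be weak equivalences, so there is nothing to trivialize against at each stage.

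The paper's proof avoids this obstacle entirely by a different mechanism. Rather than decomposing the given map, it builds for each cofibrant $X$ a functorial-in-frames cofibrant replacement $\sfB\mcC_\bullet\otimes_{\Delta\otimes\mcC}\widetilde X$ via the bar construction on $\mcC$ tensored with a cosimplicial frame $\widetilde X$, and shows (by an inductive argument over a cellular presentation of $X$, using extra degeneracies on $\sfB\mcC_c$) that the augmentation $\sfB\mcC_\bullet\otimes_{\Delta\otimes\mcC}\widetilde X\to X$ lies in $\sfW_0$. A weak equivalence $f\colon X\to Y$ is then handled by connecting the frames $\widetilde X$ and $\widetilde Y$ through an intermediate frame $\widetilde Z$ by \emph{pointwise} trivial cofibrations; since $\sfB\mcC_\bullet\otimes_{\Delta\otimes\mcC}(-)$ turns pointwise trivial cofibrations of frames into good trivial cofibrations (hence into $\sfW_0$), a 2-out-of-3 argument finishes. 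The point is that the bar construction moves the problem from ``decompose a trivial cofibration in $[\mcC,\mcM]$'' to ``decompose a pointwise trivial cofibration of frames'', which \emph{is} tractable. Your refinement strategy would need a substitute for this step, and none is visible in the proposal.
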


This implies easily that the (weighted) colimit functor is left Quillen; of course, such a result would not be true for the pointwise structure.

\begin{scorollary}
The weighted colimit functor $W\otimes_\mcC- \colon [\mcC,\mcM] \to \mcM$ is left Quillen for any pointwise flat weight $W$.
\end{scorollary}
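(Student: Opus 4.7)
The plan is to verify the two conditions that make a functor between cofibration $\mcV$-categories left Quillen: preservation of projective cofibrations, and preservation of weak equivalences between cofibrant objects. Both will be established by checking the condition on the generating/defining class and then propagating it through the closure operations, the key computational input being the Yoneda-style identification
\[W\otimes_\mcC(\mcC(c,-)\otimes M)\;\cong\;Wc\otimes M,\]
valid because $W\otimes_\mcC-$ is the left Kan extension along the unique functor $\mcC\to\mathbf{1}$, hence a left $\mcV$-adjoint.

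For cofibrations, observe that on a generator $\mcC(c,-)\otimes M\to\mcC(c,-)\otimes N$ (with $M\to N$ a cofibration in $\mcM$) the functor $W\otimes_\mcC-$ produces $Wc\otimes M\to Wc\otimes N$. Since $W$ is pointwise flat, $Wc$ is flat, so this is a cofibration in $\mcM$. Because $W\otimes_\mcC-$ is a left adjoint, it commutes with the colimit constructions (coproducts, cobase change, transfinite composition) used to generate projective cofibrations from the above class. Thus projective cofibrations go to cofibrations, and the initial object is sent to the initial object.

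For weak equivalences between cofibrant objects, I apply Theorem~\ref{t:weak_equivalences_generation}. Let $\mcW$ be the class of those maps $f$ between cofibrant objects in $[\mcC,\mcM]$ whose image $W\otimes_\mcC f$ is a weak equivalence between cofibrant objects in $\mcM$ (note the cofibrancy part is already guaranteed by the previous paragraph). On generators $\mcC(c,-)\otimes M\to\mcC(c,-)\otimes N$ with $M\to N$ a weak equivalence between cofibrant objects in $\mcM$, the image is $Wc\otimes(M\to N)$, which is a weak equivalence between cofibrant objects by flatness of $Wc$ (this is exactly what ``flat'' buys us: tensoring preserves weak equivalences between cofibrant objects as well as cofibrations). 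The class $\mcW$ inherits 2-out-of-3 trivially. For closure under the two types of homotopy invariant colimits, one uses that $W\otimes_\mcC-$ is a left adjoint and preserves cofibrations: it sends a span (resp.\ cofibrant chain) with the required cofibration conditions to one of the same shape in $\mcM$, with components in the class of weak equivalences between cofibrant objects, and that class is closed under such homotopy invariant colimits by the axioms of a cofibration category. Hence $\mcW$ satisfies all the closure properties of Theorem~\ref{t:weak_equivalences_generation}, so it contains every weak equivalence between cofibrant objects, concluding the proof.

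The only step requiring genuine care is the closure under homotopy invariant colimits, which hinges on the commutation of $W\otimes_\mcC-$ with colimits together with its preservation of cofibrations established in the previous paragraph; everything else is a direct translation of the generating description via the Yoneda identity.
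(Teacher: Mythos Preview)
Your proof is correct and follows essentially the same approach as the paper: the Yoneda isomorphism $W\otimes_\mcC(\mcC_c\otimes M)\cong Wc\otimes M$ reduces the generating weak equivalences to ones handled by flatness of $Wc$ (the paper phrases this via Brown's lemma), and then Theorem~\ref{t:weak_equivalences_generation} together with cocontinuity of $W\otimes_\mcC-$ propagates this through the closure operations. The paper's version leaves the preservation of cofibrations implicit (it is covered by Proposition~\ref{p:weighted_colimit_cofibrations}), whereas you spell it out directly; either way the argument is the same.
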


\begin{proof}
The weighted colimit satisfies $W \otimes_\mcC (\mcC(c,-) \otimes M) \cong Wc \otimes M$ and thus takes the generating weak equivalences to weak equivalences by Brown's lemma. It also commutes with the constructions from Theorem~\ref{t:weak_equivalences_generation}.
\end{proof}

\section{A very small example}

We believe that it is instructive to produce concretely cofibrant replacements of some very small diagrams before digging into the general proof. Here we present the main idea in the case that there are no endomorphisms present in $\mcC$. In order to avoid defining cofibration categories at this point, we will assume that $\mcM$ is a model $\mcV$-category.

Let there be given a diagram $X \colon \mcC\ra\mcM$ as above with $\mcC$ locally cofibrant. We will try to show on a small portion of $\mcC$ what needs to be done in order to find a cofibrant replacement of $X$. Let there be given two objects $c_0$ and $c_1$ of $\mcC$ and assume that $\mcC(c_1,c_0)=0$ and $\mcC(c_0,c_0)=\bbone=\mcC(c_1,c_1)$. Then, on this portion of $\mcC$, the diagram $X$ is given by two objects $Xc_0$ and $Xc_1$ together with a map $\mcC(c_0,c_1)\otimes Xc_0\ra Xc_1$. Choose any cofibrant replacements $\widetilde Xc_0$ and $\widetilde Xc_1$ of them. Then we have a diagram
\[\xymatrix{
\widetilde Xc_0 \ar[d]_{\varepsilon_0}^\sim & & \widetilde Xc_1 \ar[d]^{\varepsilon_1}_\sim \\
Xc_0 \ar@/_/[rr] \ar@/^/[rr] & {\scriptstyle \mcC(c_0,c_1)} & Xc_1 \\
}\]
where the double arrow with $\mcC(c_0,c_1)$ in between indicates that there is a family of morphisms from $Xc_0$ to $Xc_1$ parametrized by this object. By composing with $\varepsilon_0$ one obtains a similar family of morphisms from $\widetilde Xc_0$ to $Xc_1$. Viewing this family as a single map $\mcC(c_0,c_1)\otimes\widetilde Xc_0\ra Xc_1$ we recall our cofibrancy conditions and observe that the domain is still cofibrant. If the map $\varepsilon_1$ was actually a trivial fibration we would obtain the desired family of maps $\widetilde Xc_0\ra\widetilde Xc_1$ using the lifting axiom. This is sufficient for this simple example but not very useful when compositions of morhpisms appear. Let us therefore assume that $\varepsilon_1$ is only a weak equivalence. Let us factor
\[(\mcC(c_0,c_1)\otimes\widetilde Xc_0)+\widetilde Xc_1\lra Xc_1\]
into a cofibration followed by a weak equivalence $\widetilde X(c_0,c_1)\xra{\sim}Xc_1$. We obtain a diagram
\[\xymatrix{
\widetilde Xc_0 \ar[d]_{\varepsilon_0}^\sim \ar@/_/[rr] \ar@/^/[rr] & {\scriptstyle \mcC(c_0,c_1)} & \widetilde X(c_0,c_1) \ar[d]_\sim & \widetilde Xc_1 \ar[dl]^\sim \ar@{ >->}[l]_-\sim \\
Xc_0 \ar@/_/[rr] \ar@/^/[rr] & {\scriptstyle \mcC(c_0,c_1)} & Xc_1 \\
}\]
In this small example we obtain a cofibrant replacement given by $\widetilde Xc_0$ and $\widetilde X(c_0,c_1)$.

If we have three objects $c_0$, $c_1$, $c_2$ and non-identity morphisms only going ``up'' then doing the above separately for pairs $c_0$, $c_2$ and $c_1$, $c_2$ we obtain two candidates for a cofibrant replacement of $Xc_2$, namely, $\widetilde X(c_0,c_2)$ and $\widetilde X(c_1,c_2)$. They are a part of the following diagram of cofibrant objects over $Xc_2$.
\[\xymatrix@!=30pt{
& & \widetilde Xc_2 \ar@{ >->}[dl]_-\sim \ar@{ >->}[dr]^-\sim \\
& \widetilde X(c_0,c_2) & & \widetilde X(c_1,c_2) \\
\rightbox{\mcC(c_0,c_1,c_2)}{{}\otimes\widetilde Xc_0} \ar[ur] \ar@{ >->}[rr] & & \mcC(c_1,c_2)\otimes\widetilde X(c_0,c_1) & & \leftbox{\mcC(c_1,c_2)}{{}\otimes\widetilde Xc_1} \ar@{ >->}[ul] \ar@{ >->}[ll]
}\]
where $\mcC(c_0,c_1,c_2)$ denotes $\mcC(c_1,c_2)\otimes\mcC(c_0,c_1)$. Taking a factorization of the canonical map from the colimit of this diagram (which happens to be cofibrant!) to $Xc_2$ results in a cofibration followed by a weak equivalence $\widetilde X(c_0,c_1,c_2)\ra Xc_2$. It should perhaps be clear how to continue, at least when no endomorphisms appear. We will proceed with details in the general case in Section~\ref{s:proof_main}.

\section{Conventions and notations}

The category $\Delta$ consists of finite non-empty ordinals that we denote $[n] = \{0 < \cdots <n\}$. In particular, $[1] = \{0 < 1\}$ is the category with one arrow and thus, $\mcM^{[1]}$ denotes the category of maps in $\mcM$.

We will denote the representable functors as follows
\[\mcC_c = \mcC(c,-) \in [\mcC,\mcV], \quad \mcC^c = \mcC(-,c) \in [\mcC^\op,\mcV].\]
In particular, the standard simplex will be denoted by $\Delta^n$.

We will reserve $\mcV$ for a symmetric monoidal category, $\mcM$ will be a cofibration category/model category, later also equipped with an action of $\mcV$. The shape of the diagrams will be denoted either by $\mcD$ (direct category), $\mcR$ (Reedy category) or $\mcC$ (arbitrary enriched category).

We refer the reader to \cite{Kelly} for basics about enriched category theory, the definition and properties of a coend, as well as its special cases -- the weighted colimit and the left Kan extension. We denote the (enriched) coend $\smallint^\mcC \colon [\mcC^\op \otimes \mcC, \mcM] \to \mcM$; the coend $\smallint^\mcC F$ of a bifunctor $F$ is given as a coequalizer
\[\xymatrix{
\sum_{c_0,c_1 \in \mcC} \mcC(c_0,c_1) \otimes F(c_1,c_0) \ar@<-2pt>[r] \ar@<2pt>[r] & \sum_{c \in \mcC} F(c,c) \ar[r] & \smallint^\mcC F
}\]
(the maps apply the morphism from $\mcC(c_0,c_1)$ to one of the arguments of $F$, thus mapping to the summand $\mcC(c_i,c_i)$).

When there is given a bifunctor $\odot \colon \mcP \otimes \mcQ \to \mcM$, there is an induced bifunctor
\[\odot_\mcC \colon [\mcC^\op,\mcP] \otimes [\mcC,\mcQ] \to \mcM,\]
given as $W \odot_\mcC D = \smallint^\mcC W \odot D$, the coend of $\mcC^\op \otimes \mcC \xra{W \otimes D} \mcP \otimes \mcQ \xra{\odot} \mcM$. In the special case of an action $\otimes \colon \mcV \otimes \mcM \to \mcM$, the so obtained $W \otimes_\mcC D$ is the \emph{weighted colimit} of $D$ (weighted by $W$).

\section{Categories with cofibrations}

All categories of homotopical nature will be assumed to contain an initial object which we denote by $0$ and all functors between such categories will be assumed to preserve the initial objects.

Before proving the main theorems, we will present some preparatory material. We start with results that only concern cofibrations. These are summarized in this section.

\subsection*{Left closed classes}

Let $\mcM$ be a category. A subcategory $\sfC\subseteq\mcM$ is said to be \emph{left closed class} if it satisfies the following set of axioms in which we call morphisms of $\sfC$ \emph{cofibrations} and objects of $\sfC$ \emph{good}
\begin{itemize}
\item Coproducts of good objects exist in $\mcM$ and are good; in particular, $0$ is good.
\item All isomorphisms with good domains are cofibrations (hence, their codomains are also good).
\item A pushout of a cofibration along a map between good objects exists in $\mcM$ and is again a cofibration.
\item A transfinite composite of cofibrations exists in $\mcM$ and is again a cofibration.
\end{itemize}
An object $M$ is called \emph{cofibrant} if the unique map $0\ra M$ is a cofibration. In particular, each cofibrant object is good. A category equipped with a left closed class will be called a \emph{category with cofibrations}.

Upon replacing $\sfC$ by its full subcategory on cofibrant objects, we obtain another category with cofibrations which is \emph{reduced}: all good objects are cofibrant. Its cofibrations, i.e.\ cofibrations between cofibrant objects in $\mcM$, are called \emph{strong cofibrations} of $\mcM$. We will usually assume $\mcM$ to be reduced. However, the non-reduced case is important since it includes the case of trivial cofibrations in cofibration categories. Another example, where this level of generality is useful, are the model categories -- there all objects might be assumed to be good. A further reason for not disregarding non-cofibrant objects is that in a monoidal model category, one does not usually assume the unit of the monoidal structure to be cofibrant and much of the theory could be reproduced if it is merely good.

It is possible to \emph{generate} cofibrations by a collection of morphisms (by closing it under coproducts, pushouts and transfinite composites), but one must specify good objects in advance to specify the allowable pushouts. From this perspective, it is much simpler to generate strong cofibrations. The collection of \emph{(projective) cofibrations} of $[\mcC,\mcM]$ is generated by the $\mcC_c \otimes M \to \mcC_c \otimes N$ with $M \to N$ a cofibration in $\mcM$, where the good objects are the pointwise good diagrams.

\begin{proposition}\label{p:generating_strong_cofibrations}
Strong cofibrations in $[\mcC,\mcM]$ are generated by
\[\{\mcC_c\otimes M\ra\mcC_c\otimes N\ |\ \textrm{$c\in\mcC$ and $M\ra N$ a strong cofibration in $\mcM$}\}.\]
\end{proposition}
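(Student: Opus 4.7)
My plan is to prove the two inclusions between the reduced left closed class $\sfC_0$ generated by the strong-cofibration set on the right-hand side and the class of strong cofibrations in $[\mcC,\mcM]$.

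For the easy inclusion $\sfC_0 \subseteq$ (strong cofibrations), I would first check that each proposed generator $\mcC_c\otimes M\to \mcC_c\otimes N$ (with $M\to N$ a strong cofibration in $\mcM$) is itself a strong cofibration in $[\mcC,\mcM]$. The map $0\to M$ is a cofibration in $\mcM$ because $M$ is cofibrant; tensoring with $\mcC_c$ and invoking the generating set of projective cofibrations already identified in the excerpt, $0\to\mcC_c\otimes M$ is a projective cofibration, so $\mcC_c\otimes M$ is cofibrant, and similarly $\mcC_c\otimes N$. Hence $\mcC_c\otimes M\to\mcC_c\otimes N$ is a projective cofibration between cofibrant objects. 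Because strong cofibrations form a reduced left closed class on their own (pushouts of strong cofibrations along maps between cofibrant objects are strong, transfinite composites stay cofibrant, coproducts of cofibrant objects are cofibrant), the reduced closure of the strong generators cannot escape the strong cofibrations.

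For the harder inclusion, take an arbitrary strong cofibration $f\col X\to Y$. Since $f$ is a projective cofibration, the generation procedure provides a presentation $X=X_0\to X_1\to\cdots\to X_\lambda=Y$ as a transfinite composite in which each $X_\alpha\to X_{\alpha+1}$ is a pushout of a coproduct $\coprod_j \mcC_{c_j}\otimes M_j\to\coprod_j \mcC_{c_j}\otimes N_j$ of projective generators. By induction on $\alpha$, each $X_\alpha$ is cofibrant: $X_0=X$ is cofibrant, and each pushout step is a projective cofibration from a cofibrant object. Under the paper's running convention that $\mcM$ is reduced, every cofibration $M_j\to N_j$ in $\mcM$ is already a strong cofibration, so each constituent $\mcC_{c_j}\otimes M_j\to\mcC_{c_j}\otimes N_j$ is one of the prescribed strong generators, and its source and target (as well as their coproducts) are cofibrant by the argument of the previous paragraph. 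Therefore each attachment is a valid step inside the reduced closure $\sfC_0$, and the transfinite composite $f\col X\to Y$ also lies in $\sfC_0$.

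The main delicate point is the passage from the abstract statement ``$f$ lies in the left closed class generated by the projective generators'' to an explicit cellular presentation $X=X_0\to\cdots\to Y$; this is the standard ``straightening'' of nested pushouts, coproducts and transfinite composites into a single transfinite composite of pushouts of coproducts of generators, and I would either invoke it tacitly or record it as a preliminary lemma. A second point, which is the genuine content of the proposition, is that one must be able to perform every cell attachment inside the \emph{reduced} closure: this is precisely what the cofibrancy of $X_\alpha$ and of $\coprod_j \mcC_{c_j}\otimes M_j$ secures, and what would fail if one tried to use a non-strong generator with a non-cofibrant $M_j$. The reduced setting for $\mcM$ (conventional in the paper) is exactly what keeps all cells strong and lets the induction go through without any further gymnastics.
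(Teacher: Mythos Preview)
Your argument is correct under the reducedness hypothesis you invoke, and the overall architecture (straighten to a cellular presentation, then argue inductively that every stage and every attaching cell stays in the strong closure) matches the paper's. The one substantive difference is in how you verify that each cell is strong. You simply observe that if $\mcM$ is reduced then every cofibration $M_j\to N_j$ is automatically between cofibrant objects, so the generating cells are already strong. The paper instead performs a small factorization: it replaces the attaching cell $\mcC_c\otimes M\to\mcC_c\otimes N$ by $\mcC_c\otimes Y_{n-1}c\to\mcC_c\otimes N'$ with $N'=Y_{n-1}c+_MN$, using that $Y_{n-1}$ is projectively cofibrant and hence $Y_{n-1}c$ is cofibrant in $\mcM$. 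This trick buys generality: it does not need $M$ itself to be cofibrant, so the argument goes through even when $\mcM$ is not reduced (a situation the paper explicitly cares about, e.g.\ for the left closed class of trivial cofibrations). Under the standing reducedness convention your shortcut is entirely legitimate and a bit cleaner; just be aware that the paper's factorization is what one would need in the non-reduced setting.
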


\begin{proof}
This follows from the fact that in the chain $Y_n$ presenting a strong cofibration, all objects are cofibrant and therefore pointwise cofibrant. We have the following factorization
\[\xymatrix{
\mcC_c \otimes M \ar[r] \ar[d] & \mcC_c \otimes Y_{n-1}c \ar[r] \ar[d] & Y_{n-1} \ar[d] \\
\mcC_c \otimes N \ar[r] & \mcC_c \otimes N' \ar[r] \po & Y_n \po
}\]
where $N'$ is the pushout $Y_{n-1}c +_M N$ and thus, $Y_{n-1}c \to N'$ is a strong cofibration, as required.
\end{proof}

\subsection*{Left closed functors}

A functor $F \colon \mcM \ra \mcN$ between categories with cofibrations is called \emph{left closed} if it preserves strong cofibrations. It is easy to see that, if $F$ preserves suitable colimits, it is enough to verify this condition for a generating class of strong cofibrations.

The axioms of a left closed class allow one to define cofibrations in the category $\mcM^{[1]}$ of arrows in $\mcM$. A morphism from $f$ to $g$ is a cofibration if and only if in the corresponding square
\[\xymatrix{
A \ar[r]^i \ar[d]_f & B \ar[d]^g \\
X \ar[r]_j & Y
}\]
all of $i$, $j$ and the puhsout corner map are cofibrations; the pushout corner map is the map from the pushout to the terminal object, see Section~\ref{s:cubical_diagrams}.
An object $f\in\mcM^{[1]}$ is cofibrant if and only if it is a strong cofibration. A strong cofibration in $\mcM^{[1]}$ is a \emph{cofibrant square}, i.e.\ a square in which all objects are cofibrant, all maps are cofibrations and so is the pushout corner map.

Thus, a functor $\mcM\ra\mcN$ is left closed if and only if $\mcM^{[1]}\ra\mcN^{[1]}$ preserves cofibrant objects. There is a similar structure on $\mcM^{[1]\times[1]}$ whose cofibrant objects are precisely the cofibrant squares.

A bifunctor $\odot \colon \mcP \otimes \mcQ \ra \mcN$ is said to be a \emph{left closed bifunctor} if, for a strong cofibration $V \ra W$ in $\mcP$ and a strong cofibration $X\ra Y$ in $\mcQ$, the square
\[\xymatrix{
V\odot X \ar[r] \ar[d] & W\odot X \ar[d] \\
V\odot Y \ar[r] & W\odot Y
}\]
is cofibrant, i.e.\ if the induced bifunctor $\mcP^{[1]} \otimes \mcQ^{[1]} \ra \mcN^{[1]\times[1]}$ preserves cofibrant objects. Again, if the original bifunctor $\odot$ preserves suitable colimits in both variables, it is enough to verify the condition only for a generating class of strong cofibrations in each argument.

Put differently, $\odot$ is a left closed bifunctor if and only if for each strong cofibration $i \colon V\ra W$ of $\mcP$, the induced functor $i\odot- \colon \mcQ\ra\mcN^{[1]}$ is left closed. We will now revert this and produce a left closed class from such a bifunctor.

\subsection*{Auxiliary results on enriched coneds}

We will need the following simple, yet very useful proposition. We say that a bifunctor $\odot \colon \mcP \otimes \mcQ \to \mcN$ is \emph{balanced} if there is provided a natural isomorphism $\beta \colon (P \otimes K) \odot Q \cong P \odot (K \otimes Q)$ satisfying certain obvious relation with regard to the associativity isomorphisms $\alpha$ for the actions of $\mcC$ on $\mcP$ and $\mcQ$:
\[\xymatrix@R=1pc{
(P \otimes (K \otimes L)) \odot Q \ar@{<->}[r]_-\cong^-\alpha \ar@{<->}[dd]^-\cong_-\beta &
((P \otimes K) \otimes L) \odot Q \ar@{<->}[rd]_-\cong^-\beta \\
& & (P \otimes K) \odot (L \otimes Q) \\
P \odot ((K \otimes L) \otimes Q) \ar@{<->}[r]^-\cong_-\alpha &
P \odot (K \otimes (L \otimes Q)) \ar@{<->}[ru]^-\cong_-\beta
}\]
and also the unit isomorphisms (a commutative triangle). In particular, each action $\mcV \otimes \mcM \to \mcM$ is balanced.

\begin{proposition}\label{p:coend_projective}
Let $\odot \colon \mcP \otimes \mcQ \ra \mcN$ be a left closed bifunctor which preserves colimits in the second variable and is balanced. Then the bifunctor
\[\odot_\mcC \colon [\mcC^\op,\mcP]_\pt \otimes[\mcC,\mcQ] \ra \mcN\]
is also left closed with respect to the indicated pointwise and projective cofibrations.

More generally, if  $V \to W$ is a pointwise strong cofibration and $X \to Y$ a cofibration between pointwise cofibrant diagrams such that the coends $V \odot_\mcC X$ and $W \odot_\mcC X$ exist and are good, then so are $V \odot_\mcC Y$ and $W \odot_\mcC Y$ and in the square
\[\xymatrix{
V \odot_\mcC X \ar[r] \ar@{ >->}[d] & W \odot_\mcC X \ar@{ >->}[d] \\
V \odot_\mcC Y \ar[r] & W \odot_\mcC Y \\
}\]
both vertical maps and the pushout corner map are cofibrations.
\end{proposition}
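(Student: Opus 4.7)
My plan is to argue by transfinite induction on the construction of the cofibration $X \to Y$ from generating cofibrations $\mcC_c \otimes M \to \mcC_c \otimes N$. For the first assertion (strong cofibrations), Proposition~\ref{p:generating_strong_cofibrations} allows us to take $M \to N$ to be a strong cofibration in $\mcQ$; the more general assertion handles general cofibrations $X \to Y$ between pointwise cofibrant diagrams by the same induction, with the hypothesis that $V \odot_\mcC X$ and $W \odot_\mcC X$ are good replacing automatic goodness at the base. Since $\odot$ preserves colimits in its second variable and coends commute with colimits in that variable, $V \odot_\mcC (-)$ preserves colimits, so each cobase change, coproduct, or transfinite composite used in the construction of $X \to Y$ translates to an analogous construction in $\mcN$.

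The base case comes down to the enriched co-Yoneda identity
\[V \odot_\mcC (\mcC_c \otimes M) \;\cong\; V(c) \odot M,\]
natural in $V$ and $M$. I would derive it by manipulating the defining coequalizer of the coend: the balanced isomorphism $V(c') \odot (\mcC(c,c') \otimes M) \cong (V(c') \otimes \mcC(c,c')) \odot M$ transfers $\mcC(c,c')$ to the $\mcP$-side, and the coend then collapses via the classical Yoneda identity $V \otimes_\mcC \mcC_c \cong V(c)$ in $\mcP$. Under this identification, the square of the proposition becomes the image under $\odot$ of the strong cofibrations $V(c) \to W(c)$ in $\mcP$ and $M \to N$ in $\mcQ$, which is a cofibrant square by the left closedness of $\odot$.

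For the inductive step, if $Y$ is built from $X$ by attaching a generator at some $\mcC_c \otimes M \to X$, colimit preservation gives $V \odot_\mcC Y = V \odot_\mcC X \cup_{V(c) \odot M} (V(c) \odot N)$; the base case ensures $V(c) \odot M \to V(c) \odot N$ is a cofibration, so $V \odot_\mcC X \to V \odot_\mcC Y$ is a cobase change of a cofibration along a map between good objects, hence itself a cofibration with $V \odot_\mcC Y$ good. The analogous argument handles $W$, and the pushout-corner property for the full square follows by working in the arrow category $\mcN^{[1]}$ with its inherited left closed class, under which cofibrancy of the relevant squares is preserved by the same colimit operations. Coproducts and transfinite composites are handled routinely. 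The main obstacle is the careful derivation of the base-case identity: coordinating the balanced isomorphism with the coend formula so that the $M$-dependence factors out cleanly is the technical heart, and once that is in place, the remaining inductive bookkeeping of cofibrations and good objects under colimits is straightforward.
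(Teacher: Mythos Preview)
Your proposal is correct and follows essentially the same route as the paper: reduce to generating cofibrations $\mcC_c\otimes M\to\mcC_c\otimes N$ with $M$ cofibrant (the paper does this via the substitution $M=Y_{n-1}c$, you via Proposition~\ref{p:generating_strong_cofibrations}), use the balanced co-Yoneda isomorphism $V\odot_\mcC(\mcC_c\otimes M)\cong Vc\odot M$ to identify the generating square as a cofibrant square in $\mcN$, and then pass through pushouts and transfinite composites. Your packaging of the pushout-corner bookkeeping via the left closed structure on $\mcN^{[1]}$ is exactly the device the paper sets up earlier and uses implicitly here.
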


\begin{proof}
Let $X \to Y$ be a transfinite composite of a chain $Y_n$ with $X = Y_0$ and with $Y_{n-1} \to Y_n$ a pushout of a generating cofibration $\mcC_c\otimes M\ra\mcC_c\otimes N$ in $[\mcC,\mcM]$; we may assume that $M = Y_{n-1}c$ and is therefore cofibrant. Then, for this generating cofibration, we have an isomorphism
\[\xymatrix{
V \odot_\mcC (\mcC_c \otimes M) \ar[r] \ar[d] &
W \odot_\mcC (\mcC_c \otimes M) \ar[d] &
\ar@{}[d]|-*{\cong} &
Vc \odot M \ar@{ >->}[r] \ar@{ >->}[d] &
Wc \odot M \ar@{ >->}[d] \\
V \odot_\mcC (\mcC_c \otimes N) \ar[r] &
W \odot_\mcC (\mcC_c \otimes N) &
&
Vc \odot N \ar@{ >->}[r] &
Wc \odot N
}\]
and the pushout corner map is a cofibration since both $Vc \to Wc$ and $M \to N$ are strong cofibrations. Therefore, in the following horizontal pushout of the above square
\[\xymatrix{
V \odot_\mcC Y_{n-1} \ar[r] \ar@{ >->}[d] & W \odot_\mcC Y_{n-1} \ar@{ >->}[d] \\
V \odot_\mcC Y_n \ar[r] & W \odot_\mcC Y_n
}\]
the pushout corner map is also a cofibration. The same holds for the vertical transfinite composite -- this is the square from the statement.
\end{proof}

This proposition has the following special cases, namely, the weighted colimit and the left Kan extension.

\begin{proposition}\label{p:weighted_colimit_cofibrations}
Let $\mcC$ be an arbitrary small $\mcV$-category. Then the (partially defined) weighted colimit bifunctor $\otimes_\mcC \colon [\mcC^\op,\mcV]_\pt \otimes [\mcC,\mcM] \to \mcM$ is left closed.

More generally, if $V \to W$ is a pointwise flat map and $X \to Y$ a cofibration between pointwise cofibrant diagrams such that the weighted colimits $V \otimes_\mcC X$ and $W \otimes_\mcC X$ exist and are good, then so are $V \otimes_\mcC Y$ and $W \otimes_\mcC Y$ and in the square
\[\xymatrix{
V \otimes_\mcC X \ar[r] \ar@{ >->}[d] & W \otimes_\mcC X \ar@{ >->}[d] \\
V \otimes_\mcC Y \ar[r] & W \otimes_\mcC Y \\
}\]
both vertical maps and the pushout corner map are cofibrations.
\end{proposition}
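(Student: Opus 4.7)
The proof will follow exactly the pattern of Proposition~\ref{p:coend_projective}, specialized to the action bifunctor $\otimes \colon \mcV \otimes \mcM \to \mcM$. The point of the weaker ``pointwise flat'' hypothesis (as opposed to pointwise strong cofibration) is precisely that flatness of a map $V \to W$ in $\mcV$ is, by definition, the property that taking pushout-products with cofibrations between cofibrant objects of $\mcM$ produces cofibrant squares. In other words, flatness of $Vc \to Wc$ at each object $c$ ensures that for every cofibration $M \to N$ between cofibrant objects of $\mcM$ the square
\[\xymatrix{
Vc \otimes M \ar[r] \ar@{ >->}[d] & Wc \otimes M \ar@{ >->}[d] \\
Vc \otimes N \ar[r] & Wc \otimes N
}\]
is cofibrant. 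This is the only input from the behaviour of the bifunctor that was used in the generating step of the proof of Proposition~\ref{p:coend_projective}; since $\otimes$ preserves colimits in the second variable and is balanced, the remainder of that argument depends only on formal closure properties.

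I would therefore proceed by repeating the argument of Proposition~\ref{p:coend_projective} almost verbatim. By Proposition~\ref{p:generating_strong_cofibrations} the cofibration $X \to Y$ between pointwise cofibrant diagrams may be written as a transfinite composite of pushouts of generating cofibrations $\mcC_c \otimes M \to \mcC_c \otimes N$, and one may arrange $M = Y_{n-1}c$, so that $M$ is cofibrant. The Yoneda-type identity $V \otimes_\mcC (\mcC_c \otimes M) \cong Vc \otimes M$, valid whenever the left-hand coend exists, identifies the weighted-colimit square on a generating cofibration with the pushout-product square displayed above, for which the flatness hypothesis delivers cofibrancy directly.

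The properties ``vertical maps are cofibrations'' and ``pushout corner map is a cofibration'' are preserved by horizontal pushouts and by vertical transfinite composites, so propagating through the chain $\{Y_n\}$ passes the conclusion from the generating cofibrations up to the arbitrary cofibration $X \to Y$. The main (and essentially only) obstacle is phrasing pointwise flatness so that it supplies precisely the input the generating step requires; once this is set up, the argument is formal and runs on the same tracks as the coend version. The first, unparametrized, statement is then the special case where $V \to W$ is itself a pointwise strong cofibration, such maps being in particular pointwise flat.
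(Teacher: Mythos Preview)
Your proposal is correct and is essentially the same as the paper's proof, which consists of the single sentence ``This is a special case of Proposition~\ref{p:coend_projective} for the action $\otimes \colon \mcV \otimes \mcM \to \mcM$.'' The key observation you unpack explicitly---that flat maps are precisely the (strong) cofibrations in $\mcV$ for the relevant structure, so that ``pointwise flat'' coincides with ``pointwise strong cofibration'' in $[\mcC^\op,\mcV]_\pt$---is what makes the specialization go through; the paper has already set this up in the preceding section (``$\mcV$ together with flat maps becomes a category with cofibrations which is reduced'') and so does not repeat it.
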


\begin{proof}
This is a sepcial case of Proposition~\ref{p:coend_projective} for the action $\otimes \colon \mcV \otimes \mcM \to \mcM$.
\end{proof}

A similar result holds in the opposite situation $[\mcC^\op,\mcV] \otimes [\mcC,\mcM]_\pt \to \mcM$. There is, however, a significant difference between these two statements. While the above is not applicable to trivial cofibrations (we do not know whether they are generated in the same way as cofibrations), the case of pointwise trivial cofibrations also follows from Proposition~\ref{p:coend_projective}.

\begin{proposition}\label{p:left_Kan_extension_cofibrations}
Let $F \colon \mcC \to \mcD$ be a $\mcV$-functor between small $\mcV$-categories. Then the left Kan extension functor $F_! \colon [\mcC,\mcM] \to [\mcD,\mcM]$ is left closed.
\end{proposition}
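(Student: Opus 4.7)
The plan is to combine Proposition~\ref{p:generating_strong_cofibrations}, which exhibits a generating class for the strong cofibrations of $[\mcC,\mcM]$, with the observation made earlier in this section that a colimit-preserving functor is left closed as soon as it sends a generating class of strong cofibrations to strong cofibrations. So the task reduces to checking preservation of colimits and preservation of generators.

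First I would note that $F_!$ is a left $\mcV$-adjoint to the restriction functor $F^* \colon [\mcD,\mcM] \to [\mcC,\mcM]$; in particular it preserves all enriched colimits, so it preserves coproducts, pushouts, transfinite composites and the copowers $-\otimes M$ by objects $M\in\mcM$. Hence it suffices to verify that $F_!$ carries each generating strong cofibration $\mcC_c\otimes M \to \mcC_c\otimes N$, with $M\to N$ a strong cofibration in $\mcM$, to a strong cofibration of $[\mcD,\mcM]$.

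For this I would compute $F_!$ on representables by the enriched co-Yoneda lemma:
\[(F_!\mcC_c)(d) \;=\; \mcD(F-,d)\otimes_\mcC \mcC(c,-) \;\cong\; \mcD(Fc,d),\]
so that $F_!\mcC_c\cong\mcD_{Fc}$. Combining with the preservation of copowers yields $F_!(\mcC_c\otimes M)\cong\mcD_{Fc}\otimes M$, and therefore the image under $F_!$ of a generator $\mcC_c\otimes M\to\mcC_c\otimes N$ is the generator $\mcD_{Fc}\otimes M\to\mcD_{Fc}\otimes N$ of strong cofibrations in $[\mcD,\mcM]$. The argument is entirely formal and there is no real obstacle; the only mild point is that both the enriched co-Yoneda identification and the compatibility of $F_!$ with $\mcM$-copowers must be invoked, but both are immediate consequences of the $\mcV$-enriched adjunction $F_!\dashv F^*$.
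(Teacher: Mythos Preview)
Your argument is correct and takes a genuinely different, more elementary route than the paper's. The paper derives this proposition as a special case of Proposition~\ref{p:coend_projective}: it regards $F_!$ as the functor obtained from the left closed bifunctor
\[\otimes_\mcC \colon [\mcC^\op,[\mcD,\mcV]]_\pt \otimes [\mcC,\mcM] \lra [\mcD,\mcM]\]
by fixing the pointwise cofibrant weight $\mcD(F-,-)$. Your proof instead bypasses the general coend machinery and argues directly from Proposition~\ref{p:generating_strong_cofibrations}: since $F_!$ commutes with the closure operations and sends each generator $\mcC_c\otimes M\to\mcC_c\otimes N$ to the generator $\mcD_{Fc}\otimes M\to\mcD_{Fc}\otimes N$, it preserves strong cofibrations. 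This is shorter and avoids invoking the balanced-bifunctor framework. The paper's approach, on the other hand, exhibits the result as an instance of a uniform pattern used throughout (weighted colimits, coends, Reedy structures all treated via Proposition~\ref{p:coend_projective} and its variants), which pays off when one later needs the analogous statements for trivial cofibrations or in the Reedy setting. One small caveat: in the paper's framework $\mcM$ need not be cocomplete, so $F_!$ is a priori only partially defined; your computation on generators together with the closure under the relevant colimits shows simultaneously that $F_!$ \emph{exists} on cofibrant objects and that the result is cofibrant, so this is not a genuine gap.
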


\begin{proof}
This is a sepcial case of Proposition~\ref{p:coend_projective} for the action $\otimes \colon [\mcD,\mcV] \otimes \mcM \to [\mcD,\mcM]$; in the resulting left closed bifunctor
\[\otimes_\mcC \colon [\mcC^\op,[\mcD,\mcV]]_\pt \otimes [\mcC,\mcM] \to [\mcD,\mcM],\]
we fix as the first argument the cofibrant object $\mcD(F-,-) \in [\mcC^\op,[\mcD,\mcV]]_\pt$.
\end{proof}

\subsection*{Cubical diagrams and their pushout corner maps}\label{s:cubical_diagrams}

For a finite set $S$, we consider the category $\mcP(S)$ of all subsets of $S$ and the category $\mcP_0(S)$ of all proper subsets. We call any diagram $X\col\mcP(S)\ra\mcM$ an \emph{$S$-cubical diagram}. We denote the left Kan extension of its restriction to $\mcP_0(S)$ by $X_0$. Since $\mcP_0(S)$ is full, $X_0 I = X I$ for $I\in\mcP_0(S)$, and $X_0 S = \colim X|_{\mcP_0(S)}$. The \emph{pushout corner map} of $X$ is the canonical map
\[\pcm X\col X_0 S \lra X S.\]

Suppose that $S=S_1+S_2$, a disjoint union. We may then compute the pushout corner map for $X$ by considering the pushout corner maps in the two ``directions'' $S_1$ and $S_2$. Concretely,
\[\mcP(S_1+S_2)\cong\mcP(S_1)\times\mcP(S_2)\]
and denote by $X_1$ consider the left Kan extension of the restriction to $\mcP_0(S_1)\times\mcP(S_2)$, by $X_2$ the left Kan extension of the restriction to $\mcP(S_1)\times\mcP_0(S_2)$ and by $X_{12}$ the one for $\mcP_0(S_1)\times\mcP_0(S_2)$. We obtain a square
\[\xymatrix{
X_{12} S \ar[r] \ar[d] & X_1 S \ar[d] \\
X_2 S \ar[r] & X S
}\]
We will see that its pushout corner map is canonically isomorphic to $\pcm X$. We have $X_1 S = \colim X|_{\mcP_0(S_1)\times\mcP(S_2)}$ and similarly for the remaining corners. The pushout is therefore isomorphic to the colimit of the restriction to the union
\[(\mcP_0(S_1)\times\mcP(S_2))+_{(\mcP_0(S_1)\times\mcP_0(S_2))}(\mcP(S_1)\times\mcP_0(S_2))=\mcP_0(S)\]
and finally, the pushout corner map is $X_0 S \to X S$, as claimed.

If $X$ is an $S$-cubical diagram in $\mcM$, $Y$ is a $T$-cubical diagram in $\mcN$ and there is given a bifunctor $-\odot-\col\mcM\otimes\mcN\ra\mcQ$ then we denote by $X\odot Y$ the resulting ($S+T$)-cubical diagram in $\mcQ$ which has, for $I\in\mcP(S)$ and $J\in\mcP(T)$,
\[(X\odot Y)(I+J)=X I \odot Y J.\]
Under the assumption that the bifunctor $-\odot-$ is cocontinuous in each variable, the above square becomes
\[\xymatrix@C=50pt{
X_0 S \odot Y_0 T \ar[r]^{\id\odot(\pcm Y)} \ar[d]_{(\pcm X)\odot\id} & X_0 S \odot Y T \ar[d]^{(\pcm X)\odot\id} \\
X S \odot Y_0 T \ar[r]_{\id\odot(\pcm Y)} & X S \odot Y T
}\]
and we may conclude that
\[\pcm(X\odot Y)\cong\pcm((\pcm X)\odot(\pcm Y)).\]

\section{Enriched Reedy categories}

\subsection*{Reedy categories}

A small $\mcV$-category $\mcD$ is called \emph{direct} if there is given a labeling $|{-}| \colon \mcD\ra\lambda$ of objects of $\mcD$ by an ordinal $\lambda$ in such a way that $\mcD(d',d)=0$ if $|d'|\geq|d|$ with the sole exception $d'=d$ where we require that $\id_d \colon \bbone\ra\mcD(d,d)$ is an isomorphism. We call $|d|$ the \emph{degree} of the object $d$. The dual notion is that of an \emph{inverse} category -- the objects are still labelled by an ordinal but the morphisms point downwards.

A small $\mcV$-category $\mcR$ is called \emph{Reedy} if there is given a degree function $|{-}| \colon \mcR\ra\lambda$ and two subcategories $\mcR_+$ and $\mcR_-$ with the same sets of objects as $\mcR$ and such that
\begin{itemize}
\item the category $\mcR_+$ is direct (with the given degree function),
\item the category $\mcR_-$ is inverse (with the given degree function) and
\item for each $c',\, c\in\mcR$, the natural map
\begin{equation}\label{eq:Reedy_decomposition}
\sum_{d\in\mcR} \mcR_+(d,c) \otimes \mcR_-(c',d) \xra\cong \mcR(c',c),
\end{equation}
given by the inclusion into $\mcR$ and composing, is an isomorphism.
\end{itemize}

For an object $c\in\mcR$, we define the subdiagram $\partial\mcR^c\subseteq\mcR^c=\mcR(-,c)$ given by the sub-coproduct of \eqref{eq:Reedy_decomposition} running over all $d\in\mcR$ different from $c$ (one might restrict to those of degree less than $|c|$). The corresponding weighted colimit, for a diagram $X \colon \mcR \to \mcM$, is called the \emph{latching object} of $X$ at $c$ and denoted $L_cX=\partial\mcR^c\otimes_\mcR X$; in general, it may fail to exist. The inclusion $\partial\mcR^c \to \mcR^c$ will be denoted by $j^c$. Dually, we define the inclusion $j_{c'} \colon \partial\mcR_{c'} \to \mcR_{c'}=\mcR(c',-)$ whose domain is the sub-coproduct of \eqref{eq:Reedy_decomposition} running over all $d\in\mcR$ different from $c'$.

We denote by $\mcR_{(n)}$ the full subcategory of all objects of degree $n$; $\mcR_{(\leq n)}$ and $\mcR_{(<n)}$ are defined similarly. We define the \emph{$n$-skeleton}%
\footnote{
	For another point of view, as in \cite{GoerssJardine}, denote by $i_n \colon \mcR_{(\leq n)} \to \mcR$ the inclusion. The $n$-skeleton functor $\sk_n$ is the composition $\sk_n=(i_n)_!(i_n)^*$ using the induced restriction $(i_n)^*$ and left Kan extension $(i_n)_!$ functors
	\[(i_n)_! \col [\mcR_{(\leq n)},\mcM]\rightleftarrows[\mcR,\mcM] \loc (i_n)^*.\]
}
$\sk_n X = (\sk_n \mcR(-,-)) \otimes_\mcR X$, where $\sk_n \mcR(-,-)$ consists of those morphisms that factor through an object of degree at most $n$, i.e.
\[\sk_n \mcR(c',c) = \sum_{d\in\mcR_{(\leq n)}} \mcR_+(d,c) \otimes \mcR_-(c',d).\]
For $c \in \mcR_{(n)}$, one has $(\sk_nX)c\cong Xc$ and $(\sk_{n-1}X)c \cong L_cX$.

There is a way of building $X\in[\mcR,\mcM]$ inductively. Namely, $X\cong\colim_n\sk_nX$ and, for each $n$, there is a square
\begin{equation}\label{eq:skeleton_construction}
\xymatrixc{
\topbox{\sum\limits_{c\in\mcR_{(n)}}{}}{\mcR_c \otimes L_cX +_{\partial\mcR_c \otimes L_cX} \partial\mcR_c \otimes Xc}{} \ar[r] \ar[d] & \sk_{n-1} X \ar[d] \\
\topbox{\sum\limits_{c\in\mcR_{(n)}}{}}{\mcR_c \otimes Xc}{} \ar[r] & \sk_n X \po
}\end{equation}
where the component of the top map at $c$ is the adjunct of the canonical isomorphism $L_cX\ra(\sk_{n-1}X)c$ on the first summand and the canonical map $\partial\mcR_c \otimes Xc = \sk_{n-1}\mcR(c,-) \otimes Xc \to \sk_{n-1}X$ on the second summand. We will now show that this square is a pushout square. Observe that the square is obtained as the weighted colimit of $X$ with weights forming the following diagram
\[\xymatrix{
\topbox{\sum\limits_{c\in\mcR_{(n)}}{}}{\mcR_c \otimes \partial\mcR^c +_{\partial\mcR_c \otimes \partial\mcR^c} \partial\mcR_c \otimes \mcR^c}{} \ar[r] \ar[d] & \sk_{n-1} \mcR(-,-) \ar[d] \\
\topbox{\sum\limits_{c\in\mcR_{(n)}}{}}{\mcR_c \otimes \mcR^c}{} \ar[r] & \sk_n \mcR(-,-) \po
}\]
They form a pushout square by an easy inspection: the map on the left is an inclusion of a sub-coproduct with complementary summands exactly $\sum_{c\in\mcR_{(n)}} (\mcR_+)_c \otimes (\mcR_-)^c$ and the same is true for the map on the right.

We note that the pushout square \eqref{eq:skeleton_construction} considerably simplifies for a direct category since then $\partial\mcR_c=0$ and thus, the top left corner reads $\sum_{c\in\mcR_{(n)}} \mcR_c \otimes L_cX$.

Clearly, in the inductive construction of $X$ from $\sk_nX$, we may skip all the steps in which the latching map is an isomorphism. This will be important in the relative case to follow, together with the following lemma.

\begin{lemma}\label{l:cells}
Let $\mcR$ be a locally flat Reedy $\mcV$-category. Then $\pcm(j^c \otimes_\mcR j_{c'})$ is flat.
\end{lemma}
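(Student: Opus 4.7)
\emph{Proof sketch.} The plan is to compute the four corners of the square $j^c \otimes_\mcR j_{c'}$ in $\mcV$ explicitly, identify each as a sub-coproduct under the Reedy decomposition \eqref{eq:Reedy_decomposition} of $\mcR(c',c) \cong \sum_d \mcR_+(d,c)\otimes\mcR_-(c',d)$, and then analyze the pushout corner map as the inclusion of a coproduct summand with flat complement.

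For three of the four corners, the co-Yoneda formula $\mcR^c \otimes_\mcR Y \cong Yc$ for $Y \in [\mcR,\mcV]$ (and its dual) gives $\mcR^c \otimes_\mcR \mcR_{c'} \cong \mcR(c', c)$ (all pivots), $\mcR^c \otimes_\mcR \partial\mcR_{c'} \cong \partial\mcR_{c'}(c)$ (sub over pivots $d \ne c'$), and $\partial\mcR^c \otimes_\mcR \mcR_{c'} \cong \partial\mcR^c(c')$ (sub over pivots $d \ne c$). For the remaining corner $\partial\mcR^c \otimes_\mcR \partial\mcR_{c'}$, I would prove by induction on $|c'|$ that it is the sub-coproduct of $\mcR(c',c)$ over pivots $d$ different from both $c$ and $c'$. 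The inductive step applies the skeletal construction \eqref{eq:skeleton_construction} to $X = \mcR_{c'}$ to present $\partial\mcR_{c'} = \sk_{k'-1}\mcR_{c'}$ as a finite sequence of pushouts attaching cells at objects $d$ of degree $< k'$. Since $\partial\mcR^c \otimes_\mcR (-)$ commutes with colimits, the computation reduces to identifying $L_c \mcR_d = \partial\mcR^c(d)$ (immediate from co-Yoneda) and $L_c \partial\mcR_d = \partial\mcR^c \otimes_\mcR \partial\mcR_d$ (by the inductive hypothesis, since $|d| < k'$).

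With all four corners identified as sub-coproducts of $\mcR(c',c)$, the pushout of the top-and-left subcube is the union $\partial\mcR^c(c') \cup \partial\mcR_{c'}(c)$. When $c \ne c'$, this union covers every pivot, so $\pcm(j^c \otimes_\mcR j_{c'})$ is an isomorphism, hence flat. When $c = c'$, the pcm is the inclusion of the sub-coproduct over pivots $d \ne c$ into $\mcR(c,c)$; its complement is the single pivot-$c$ summand $\mcR_+(c,c) \otimes \mcR_-(c,c) \cong \bbone$. Since $\bbone$ is flat (tensoring with the unit is the identity) and local flatness of $\mcR$ guarantees the remaining pivot summands are flat, the inclusion of a coproduct summand with flat complement is flat, so the pcm is flat.

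The main obstacle is the inductive identification of $\partial\mcR^c \otimes_\mcR \partial\mcR_{c'}$: although pointwise Reedy decompositions of both $\partial\mcR^c$ and $\partial\mcR_{c'}$ are available, these decompositions are \emph{not} natural under the $\mcR$-action, since precomposition in $\mcR$ can shift the pivot of a Reedy factorization. One therefore cannot simply tensor the pointwise decompositions; the skeletal filtration is what circumvents this by reducing each inductive step to manipulations with representable cells, where co-Yoneda applies cleanly.
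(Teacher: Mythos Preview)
Your overall strategy matches the paper's: identify the four corners of $j^c \otimes_\mcR j_{c'}$ as sub-coproducts of the Reedy decomposition of $\mcR(c',c)$, then observe that the pushout corner map is an isomorphism when $c \neq c'$ and the inclusion of the complement of the $\bbone$-summand when $c = c'$. The difference lies entirely in your treatment of the fourth corner $\partial\mcR^c \otimes_\mcR \partial\mcR_{c'}$. The paper does not use induction or the skeletal filtration; it writes down the composition map $\partial\mcR^c \otimes_\mcR \partial\mcR_{c'} \to \mcR_\nmono(c',c)$ and constructs an explicit inverse by sending each summand $\mcR_+(d,c) \otimes \mcR_-(c',d)$ (with $d \neq c,c'$) into the $e=d$ component of the coend via the inclusions $\mcR_+(d,c) \subseteq \partial\mcR^c(d)$ and $\mcR_-(c',d) \subseteq \partial\mcR_{c'}(d)$. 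This sidesteps the naturality issue you flag: one never needs the Reedy decomposition to be functorial, only to produce a single map between fixed objects, which the coend relations then verify is inverse to composition. Your inductive route via \eqref{eq:skeleton_construction} should work, but it requires tracking $\partial\mcR^c \otimes_\mcR \sk_n\mcR_{c'}$ through a chain of pushouts and is substantially more laborious than the two-line direct check.

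One minor point: your claim that ``local flatness of $\mcR$ guarantees the remaining pivot summands are flat'' is not obviously true (a coproduct summand of a flat object need not be flat in a general cofibration category) and is in any case unnecessary --- what matters is that the \emph{complement} of the pushout inside $\mcR(c',c)$ is $\bbone$, so the pushout corner map is a cobase change of $0 \to \bbone$, which is flat. The paper phrases it exactly this way.
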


\begin{proof}
The square $j^c \otimes_\mcR j_{c'}$ is isomorphic to
\[\xymatrix{
\partial\mcR^c \otimes_\mcR \partial\mcR_{c'} \ar[r] \ar[d] & \mcR^c \otimes_\mcR \partial\mcR_{c'} \ar[d] & \ar@{}[d]|-*{\cong} & \mcR_\nmono(c',c) \ar[r] \ar[d] & \mcR_\npos(c',c) \ar[d] \\
\partial\mcR^c \otimes_\mcR \mcR_{c'} \ar[r] & \mcR^c \otimes_\mcR \mcR_{c'} & & \mcR_\nneg(c',c) \ar[r] & \mcR(c',c)
}\]
where $\mcR_\npos(c',c)$ denotes the sub-coproduct in \eqref{eq:Reedy_decomposition} indexed by all $d$ with $d \neq c'$, similarly $\mcR_\nneg(c',c)$ has $d \neq c$ and $\mcR_\nmono(c',c)$ has $d \neq c'$ and $d \neq c$: all the isomorphisms are instances of the Yoneda lemma with the exception of
\[\partial\mcR^c \otimes_\mcR \partial\mcR_{c'} \cong \mcR_\nmono(c',c) = \sum_{\makebox[0pt]{$\scriptstyle d\in\mcR,\,d \neq c' \text{ and } d \neq c$}} \mcR_+(d,c) \otimes \mcR_-(c',d).\]
Here, the map from the left to the right is simply the composition and its inverse is easily constructed on each summand from the inclusions $\mcR_+(d,c) \subseteq \partial\mcR^c d$ and $\mcR_-(c',d) \subseteq \partial\mcR_{c'} d$.

The pushout in the right-hand square is $\mcR_\nd(c',c)$, the sub-coproduct containing all the summands with the exception of the identity $c'=d=c$. The pushout corner map is thus either an isomorphism (when $c' \neq c$) or a pushout of $0 \to \bbone$ (when $c'=c$); in both cases, it is flat.
\end{proof}

\subsection*{Relative Reedy categories}

For technical reasons, namely to deal with Reedy cofibrations between diagrams that are not necessarily Reedy cofibrant, we will need to cover the relative situation.

Let $\mcD$ be a direct $\mcV$-category. We say that $\mcD' \subseteq \mcD$ is a \emph{(full) direct subcategory} if it is a full subcategory such that, for $d \in \mcD \smallsetminus \mcD'$ and $d' \in \mcD'$, we have $\mcD(d,d') = 0$ (i.e.\ $\mcD'$ forms an ``initial part'' of $\mcD$). Dually, a \emph{(full) inverse subcategory} $\mcI' \subseteq \mcI$ of an inverse $\mcV$-category $\mcI$ is a full subcategory such that, for $i' \in \mcI'$ and $i \in \mcI \smallsetminus \mcI'$, we have $\mcI(i',i) = 0$.

Let $\mcR$ be a Reedy $\mcV$-category. We say that $\mcR' \subseteq \mcR$ is a \emph{(full) Reedy subcategory} if $\mcR'_+ \subseteq \mcR_+$ is a direct subcategory and $\mcR'_- \subseteq \mcR_-$ an inverse subcategory; this implies that $\mcR' \subseteq \mcR$ is indeed full.

In this situation, we say that a diagram $X \colon \mcR \to \mcM$ is \emph{Reedy cofibrant away from $\mcR'$} if, for each $c \in \mcR \smallsetminus \mcR'$, the latching object $L_c X$ exists and the respective latching map $L_c X \to Xc$ is a cofibration; in particular, both $L_c X$ and $Xc$ must be good.

For the following lemma, we denote $\iota \colon \mcR' \to \mcR$ the inclusion.

\begin{lemma}\label{l:cofibrations_characterization}
Let $X \colon \mcR \to \mcM$ be a diagram such that $\iota_! \iota^* X$ exists and is pointwise good away from $\mcR'$. Then $X$ is Reedy cofibrant away from $\mcR'$ if and only if the canonical map $\iota_! \iota^* X \to X$ lies in the cellular closure of
\[\{\pcm(j_d \otimes k) \mid \textrm{$d \in \mcR \smallsetminus \mcR'$ and $k$ a cofibration in $\mcM$}\}\]
(closure under coproducts, pushouts and transfinite composites).
\end{lemma}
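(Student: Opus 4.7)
The plan is to build a relative skeletal filtration of $\iota_!\iota^*X \to X$ and identify each of its steps as a pushout of cells of the indicated form. Let $\mcR_{[n]} \subseteq \mcR$ denote the full Reedy subcategory generated by $\mcR'$ together with all objects of degree $\leq n$, with inclusion $\iota_n$. Set $Z_n = (\iota_n)_!(\iota_n)^* X$, so that $Z_{-1} = \iota_!\iota^*X$ and the canonical maps $Z_{n-1} \to Z_n$ compose to $\iota_!\iota^*X \to X = \colim_n Z_n$.

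The key computation, performed exactly as for~\eqref{eq:skeleton_construction} but with the sum restricted to $c \in (\mcR \smallsetminus \mcR')_{(n)}$, identifies $Z_{n-1} \to Z_n$ as the pushout whose left vertical is the coproduct of the cells $\pcm(j_c \otimes (L_cX \to Xc))$ and whose top horizontal factors through $Z_{n-1}$ because the Reedy decomposition of $\partial\mcR^c$ involves only objects of degree strictly less than $|c|$. Hence, for the forward direction, if $X$ is Reedy cofibrant away from $\mcR'$ then each $L_c X \to Xc$ with $c \notin \mcR'$ is a cofibration in $\mcM$, so every step of the filtration is a pushout of a coproduct of generating cells, and $\iota_!\iota^*X \to X$ lies in the cellular closure.

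For the converse I argue by induction that Reedy cofibrancy away from $\mcR'$ is preserved along the cellular construction. At the base, $\iota_!\iota^*X$ has an \emph{isomorphism} as latching map at every $c \in \mcR \smallsetminus \mcR'$: computing both as coends over $\mcR'$ via Fubini, one finds that $\partial\mcR^c|_{\mcR'}$ and $\mcR^c|_{\mcR'}$ coincide because the only ``missing'' summand $\mcR_+(c,c) \otimes \mcR_-(d',c) = \mcR_-(d',c)$ vanishes for $d' \in \mcR'$, $c \notin \mcR'$ by the inverse subcategory condition; both sides are good by the pointwise-good assumption, and an isomorphism with good domain is a cofibration. Attaching a cell $\pcm(j_d \otimes k)$ with $d \in \mcR \smallsetminus \mcR'$ and $k$ a cofibration in $\mcM$ changes the latching map at $c=d$ by cobase change along $k$ (and leaves $L_d$ itself unchanged, since $\partial\mcR^d \otimes_\mcR \mcR_d = \partial\mcR^d(d) = 0$), and at other $c \in \mcR \smallsetminus \mcR'$ by cobase change along $\pcm(j^c(d) \otimes k)$, which is a cofibration because $j^c(d)$ is the inclusion of a sub-coproduct with flat complement by local flatness of $\mcR$. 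Coproducts and transfinite composites then preserve the property by the axioms of a left closed class.

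The main obstacle I expect is the bookkeeping for the relative pushout square: one must verify that the top attaching map genuinely lands in $Z_{n-1}$ rather than requiring data from higher degrees, and that the left vertical reassembles into the claimed coproduct of $\pcm(j_c \otimes (L_cX \to Xc))$; both amount to decomposing the weight $\sk_n \mcR_{[n]}(-,-)/\sk_n \mcR_{[n-1]}(-,-)$ into the summands $\sum_{c \in (\mcR \smallsetminus \mcR')_{(n)}} (\mcR_+)_c \otimes (\mcR_-)^c$ and running the weighted-colimit argument of the absolute case verbatim.
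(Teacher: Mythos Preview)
Your overall architecture matches the paper's: skeletal filtration for the forward direction, and for the converse, show that $\iota_!\iota^*X$ is Reedy cofibrant away from $\mcR'$ and that attaching a generating cell preserves this property. The forward direction and the base case are fine (your argument for the base case is essentially the paper's, rephrased).

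The gap is in the inductive step of the converse: your computation of how the latching data changes under attaching $\pcm(j_d\otimes k)$ is only correct for \emph{direct} categories. In a general Reedy category, $\partial\mcR^d(d)=\sum_{e\neq d}\mcR_+(e,d)\otimes\mcR_-(d,e)$ need not vanish (take $\mcR=\Delta$, $d=[1]$, $e=[0]$), so $L_d$ does change. Likewise, at $c\neq d$ you implicitly treat the cell as $\mcR_d\otimes k$, ignoring the $\partial\mcR_d$ part of $j_d$; the pushout corner map governing the change in the latching square is not $\pcm(j^c(d)\otimes k)$ but $\pcm\big(\pcm(j^c\otimes_\mcR j_d)\otimes k\big)$. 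Finally, even your reduced claim is not justified: the complement of $j^c(d)$ is $\mcR_-(d,c)$, a summand of $\mcR(d,c)$, and local flatness of $\mcR$ does not obviously imply flatness of coproduct summands.

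The fix is exactly what the paper does: recognize the latching square of the cell at $c$ as the cube $j^c\otimes_\mcR j_d\otimes k$, so its pushout corner map is $\pcm\big(\pcm(j^c\otimes_\mcR j_d)\otimes k\big)$, and invoke Lemma~\ref{l:cells} to see that $\pcm(j^c\otimes_\mcR j_d)$ is flat (an isomorphism for $c\neq d$, a pushout of $0\to\bbone$ for $c=d$). This handles all $c$ uniformly and does not depend on summands of hom-objects being flat.
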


\begin{proof}
For the purpose of this proof, we define a Reedy cofibration away from $\mcR'$ to be a map $f \colon X \to Y$ such that for all $c \in \mcR \smallsetminus \mcR'$, both $L_c X$ and $L_c Y$ exist, the induced map $L_c X \to L_c Y$ is a cofibration and the pushout corner map in
\[\xymatrix{
L_cX \ar[r] \ar[d] & L_cY \ar[d] \\
Xc \ar[r] & Yc
}\]
is a cofibration. Later, we will present a more general definition.

First observe that $\iota_! \iota^* X$ is cofibrant away from $\mcR'$ -- Lemma~\ref{l:coend_restriction_extension} gives the two right-hand side isomorphisms in the diagram
\[\xymatrix@C=0.5pc{
L_c (\iota_! \iota^* X) \ar@{}[r]|-{=} \ar[d] & \partial\mcR^c \otimes_\mcR \iota_! \iota^* X \ar@{}[r]|-{\cong} \ar[d] & \iota^* \partial\mcR^c \otimes_\mcR \iota^* X \ar[d]^-{=} \\
(\iota_! \iota^* X) c \ar@{}[r]|-{\cong} & \mcR^c \otimes_\mcR \iota_! \iota^* X \ar@{}[r]|-{\cong} & \iota^* \mcR^c \otimes_\mcR \iota^* X
}\]
and $\iota^* \partial\mcR^c = \iota^* \mcR^c$ for $c \in \mcR \smallsetminus \mcR'$ (there are no morphisms in $\mcR_-$ from $\mcR'$ to $\mcR \smallsetminus \mcR'$).

The square of latching maps for the generating map is $\pcm(j^c \otimes_\mcR j_d) \otimes k$. Since $\pcm(j^c \otimes_\mcR j_d)$ is flat by Lemma~\ref{l:cells}, $\pcm(j_d \otimes k)$ is a Reedy cofibration away from $\mcR'$ and so is its arbitrary vertical pushout. This implies that any map from the concerned closure whose domain is Reedy cofibrant away from $\mcR'$ will have codomain also Reedy cofibrant away from $\mcR'$.

The necessity follows from building $X$ from $\iota_! \iota^* X$ in the canonical way as in \eqref{eq:skeleton_construction}. Since these diagrams agree at $\mcR'$, one only needs cells for objects $c \in \mcR \smallsetminus \mcR'$, for which the corresponding latching maps $L_cX \to Xc$ are assumed to be cofibrations.
\end{proof}

\subsection*{Reedy cofibrations}

Consider a transformation $f \colon X \to Y$ and make it into a diagram $F \colon \mcR \otimes [1] \to \mcM$. We say that $f$ is a \emph{Reedy cofibration} if $F$ is cofibrant relative to $\mcR \otimes \{0\}$. A rather simple observation is that good diagrams are exactly the pointwise good (the latching maps of the diagram $\Id_X$ corresponding to the identity $\id_X$ are the identity maps $\id_{Xc}$).

Clearly, if both $L_c X$ and $L_c Y$ exist, then $L_{c \otimes 1}F \to F(c \otimes 1)$ is the pushout corner map in
\[\xymatrix{
L_cX \ar[r] \ar[d] & L_cY \ar[d] \\
Xc \ar[r] & Yc
}\]
and thus, our definition generalizes the one used in the proof of Lemma~\ref{l:cofibrations_characterization}.

Applying Lemma~\ref{l:cofibrations_characterization} to this particular situation, we get the following proposition.

\begin{proposition}\label{prop_projective_cofibrations_over_direct_categories}
Let $f \colon X \to Y$ be a transformation with a pointwise good domain $X$. Then $f$ is a Reedy cofibration if and only if it lies in the cellular closure of
\[\{\pcm(j_c \otimes k) \mid \textrm{$c \in \mcR$ and $k$ a cofibration in $\mcM$}\}\]
(closure under coproducts, pushouts and transfinite composites).
\end{proposition}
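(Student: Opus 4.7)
The strategy is to invoke Lemma~\ref{l:cofibrations_characterization} applied to the inclusion $\iota \colon \mcR \otimes \{0\} \hookrightarrow \mcR \otimes [1]$ with $\mcR' = \mcR \otimes \{0\}$. By the very definition given just before the statement, $f$ is a Reedy cofibration iff $F$ is cofibrant relative to $\mcR \otimes \{0\}$, and the preceding paragraph has already computed $L_{(c,1)}F \cong L_c Y +_{L_c X} Xc$ so that this cofibrancy condition is precisely the one named ``Reedy cofibrant away from $\mcR'$'' in the proof of Lemma~\ref{l:cofibrations_characterization}. So the whole proof reduces to checking the hypothesis of that lemma and translating its conclusion.

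For the Reedy structure on $\mcR \otimes [1]$ I take degree $|(c,i)| = |c| + i$, positive part $(\mcR \otimes [1])_+ = \mcR_+ \otimes [1]$, and negative part equal to $\mcR_-$ on morphisms that preserve the $[1]$-coordinate and $0$ otherwise; the factorization \eqref{eq:Reedy_decomposition} then splits off the trivial factorization of $[1]$ into identities, so the axioms hold. A direct check using the criteria from the ``Relative Reedy categories'' subsection shows $\mcR \otimes \{0\}$ is a full Reedy subcategory. To verify the hypothesis of Lemma~\ref{l:cofibrations_characterization}, observe $\iota^* F = X$, and a Yoneda computation gives $\iota_! \iota^* F$ equal to the constant-along-$[1]$ extension of $X$, i.e.\ the arrow $X \xra{\id} X$; its values $Xc$ are pointwise good by assumption on $X$, and the canonical map $\iota_! \iota^* F \to F$ is $\id_X$ on the $\{0\}$-slice and $f$ on the $\{1\}$-slice.

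It remains to match the generating cells. From the explicit Reedy structure, $(\mcR \otimes [1])_{(c,1)}$ vanishes on the $\{0\}$-slice and equals $\mcR_c$ on the $\{1\}$-slice, and likewise $\partial(\mcR \otimes [1])_{(c,1)}$ equals $\partial\mcR_c$ concentrated on the $\{1\}$-slice. Therefore $\pcm(j_{(c,1)} \otimes k)$, viewed as a morphism of $(\mcR \otimes [1])$-diagrams, is $\id_0$ on the $\{0\}$-slice and $\pcm(j_c \otimes k)$ on the $\{1\}$-slice. Under the equivalence $[\mcR \otimes [1], \mcM] \cong [\mcR, \mcM]^{[1]}$, taking cellular closure starting from $\iota_! \iota^* F$ using such generators leaves the $\{0\}$-slice equal to $X$ throughout and builds the $\{1\}$-slice as a cellular extension of $X$ via $\pcm(j_c \otimes k)$ in $[\mcR, \mcM]$. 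Hence the cellular description of Lemma~\ref{l:cofibrations_characterization} for $\iota_! \iota^* F \to F$ translates exactly into $f$ lying in the cellular closure of $\{\pcm(j_c \otimes k)\}$ in $[\mcR, \mcM]$, which is the claim.

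There is no substantive obstacle; the proof is essentially bookkeeping about the product Reedy structure on $\mcR \otimes [1]$ together with the observation that cells attached at objects $(c,1)$ act only on the codomain of an arrow, leaving its domain untouched.
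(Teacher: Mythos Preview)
Your proof is correct and follows exactly the paper's approach: apply Lemma~\ref{l:cofibrations_characterization} to the pair $\mcR \otimes \{0\} \subseteq \mcR \otimes [1]$, identify $\iota_!\iota^*F$ with $\Id_X$, and observe that the cells $\pcm(j_{(c,1)} \otimes k)$ are concentrated on the $\{1\}$-slice where they equal $\pcm(j_c \otimes k)$. The paper's proof is the one-sentence version of what you wrote; your added detail (the explicit Reedy structure on $\mcR \otimes [1]$, the verification that $\mcR \otimes \{0\}$ is a Reedy subcategory, and the Yoneda computation of $\iota_!\iota^*F$) is all accurate bookkeeping.
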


\begin{proof}
In exactly the same way $F$ is built from $\Id_X$ using the cells $\pcm(j_{c \otimes 1} \otimes k)$ from Lemma~\ref{l:cofibrations_characterization}, the codomain $Y$ of $F$ is built from the codomain $X$ of $\Id_X$ using the cells $\pcm(j_c \otimes k)$.
\end{proof}

\begin{proposition}\label{p:coend_Reedy}
Let ${-} \otimes {-} \colon \mcP \otimes \mcQ \ra \mcN$ be a left closed bifunctor which preserves colimits in both variables and is balanced. Then the bifunctor
\[{-} \otimes_\mcR {-} \colon [\mcR^\op,\mcP]_\Reedy \otimes [\mcR,\mcQ]_\Reedy \ra \mcN\]
is left closed with respect to the Reedy cofibrations.
\end{proposition}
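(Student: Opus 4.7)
The plan is to mimic the strategy used in the proof of Proposition~\ref{p:coend_projective}. Since $\odot_\mcR$ preserves colimits in each variable (it is a coend of a bifunctor that preserves colimits in the second variable, and the first variable enters via $\odot$ which similarly preserves colimits), it will suffice by Proposition~\ref{prop_projective_cofibrations_over_direct_categories} to verify the left-closed condition on generating Reedy cells, i.e.\ for pairs $V \to W = \pcm(j^c \otimes \ell)$ in $[\mcR^\op,\mcP]_\Reedy$ and $X \to Y = \pcm(j_d \otimes k)$ in $[\mcR,\mcQ]_\Reedy$, with $\ell$ and $k$ strong cofibrations in $\mcP$ and $\mcQ$ respectively.

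Given such a pair of generators, the first step is to apply the pushout corner formula from the end of the cubical diagrams section, so as to move the two ``outer'' pcm's inside:
\[ \pcm\bigl(\pcm(j^c \otimes \ell) \odot_\mcR \pcm(j_d \otimes k)\bigr) \cong \pcm\bigl((j^c \otimes \ell) \odot_\mcR (j_d \otimes k)\bigr), \]
now viewed as the pcm of a $[1]^4$-cube in $\mcN$. The crucial step is then to exploit the balance isomorphism, applied corner-by-corner together with commutation of the coend with colimits, to re-index this 4-cube as
\[ (j^c \otimes \ell) \odot_\mcR (j_d \otimes k) \cong (j^c \otimes_\mcR j_d) \otimes (\ell \odot k), \]
where the first factor is a 2-cube in $\mcV$, the second a 2-cube in $\mcN$, and the outer $\otimes$ denotes the canonical ``action'' of $\mcV$ on the image of $\odot$ furnished by balance. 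Applying the pcm formula once more in the reverse direction then yields
\[ \pcm\bigl((j^c \otimes_\mcR j_d) \otimes (\ell \odot k)\bigr) \cong \pcm\bigl(\pcm(j^c \otimes_\mcR j_d) \otimes \pcm(\ell \odot k)\bigr). \]

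At this point the two ingredients on the right become manageable: by Lemma~\ref{l:cells}, $\pcm(j^c \otimes_\mcR j_d)$ is flat in $\mcV$, and by the hypothesis that $\odot$ is a left closed bifunctor, $\pcm(\ell \odot k)$ is a strong cofibration in $\mcN$. The final pcm is then a pushout product of a flat map with a strong cofibration, hence a strong cofibration by the defining property of flatness. The main obstacle will be making the balance-based re-indexing fully rigorous (each corner must be verified using the coherence diagram from the definition of balance) and, in the non-reduced setting, tracking good objects along the pushouts and transfinite composites that build up arbitrary Reedy cofibrations from the generators, mirroring the chain argument in the proof of Proposition~\ref{p:coend_projective}; flatness of $\pcm(j^c \otimes_\mcR j_d)$ is exactly what is needed for goodness to propagate at each step.
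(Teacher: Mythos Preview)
Your argument is essentially the paper's: reduce to generating Reedy cells via Proposition~\ref{prop_projective_cofibrations_over_direct_categories}, collapse the iterated $\pcm$'s into a single 4-cube, pull the $\mcV$-part through the coend to obtain $j^c \otimes_\mcR j_d$, and finish with Lemma~\ref{l:cells}.

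The one imprecision is your final regrouping $(j^c \otimes_\mcR j_d)\otimes(\ell\odot k)$ and the appeal to ``flatness'' there. No action of $\mcV$ on $\mcN$ is assumed, so the expression $V\otimes N$ for $N\in\mcN$ is not defined, and the definition of ``flat'' does not apply to $\mcN$. Your proposed workaround (defining $V\otimes(P\odot Q):=(V\otimes P)\odot Q$ via balance) is only a partial operation on the image of $\odot$, and extending it to the pushout corner $\pcm(\ell\odot k)$ already requires unpacking back through $\odot$. The paper sidesteps this by leaving the $\mcV$-factor sandwiched,
\[
\pcm\bigl(\pcm(k\otimes j^c)\otimes_\mcR\pcm(j_{c'}\otimes l)\bigr)\;\cong\;\pcm\bigl(k\otimes(j^c\otimes_\mcR j_{c'})\otimes l\bigr),
\]
so that flatness of $\pcm(j^c\otimes_\mcR j_{c'})$ is applied to the given action on $\mcP$ (or $\mcQ$), producing a strong cofibration there, and then the left-closedness of $\odot$ yields the strong cofibration in $\mcN$. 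This is the same computation you intend, just parenthesized so that every step lives in a category where the relevant structure is actually available.
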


\begin{proof}
Again, this is about weights. Let us consider the generating Reedy cofibrations $\pcm(k \otimes j^c)$, $\pcm(j_{c'} \otimes l)$ and the pushout corner map of their image
\[\pcm(\pcm(k \otimes j^c) \otimes_\mcR \pcm(j_{c'} \otimes l)) \cong \pcm(k \otimes (j^c \otimes_\mcR j_{c'}) \otimes l).\]
According to Lemma~\ref{l:cells}, $\pcm(j^c \otimes_\mcR j_{c'})$ is flat and this finishes the proof.
\end{proof}

\section{Cofibration categories}

Finally, we introduce the main notion of this paper, details may be found in \cite{Radulescu-Banu}. We keep our assumption that $\mcM$ should contain an initial object and that all functors should preserve it.

A \emph{cofibration category} is a category equipped with subcategories $\sfC$ of \emph{cofibrations} and $\sfW$ of \emph{weak equivalences} satisfying
\begin{itemize}
\item Weak equivalences satisfy 2-out-of-3 property and include all isomorphisms.
\item The classes $\sfC$ of cofibrations and $\sfW\cap\sfC$ of trivial cofibrations are left closed. Observe that they share the same class of objects and, again, we call these objects \emph{good}.
\item Every map with a good domain admits a (cofibration, weak equivalence) factorization.
\end{itemize}
In the proceeding, we will always assume that all good objects are cofibrant; they are however rarely cofibrant with respect to $\sfW\cap\sfC$ so that the previous generality was useful.

We say that a functor $F \colon \mcM \to \mcN$ is \emph{left Quillen} if it preserves cofibrations and weak equivalences between cofibrant objects. It follows from Brown's lemma \cite[Lemma 1.3.1]{Radulescu-Banu} that this is equivalent to $F$ preserving cofibrations and trivial cofibrations, i.e.\ that $\mcM^{[1]}\ra\mcN^{[1]}$ preserves cofibrant objects and weak equivalences between them. A bifunctor $\mcP \otimes \mcQ \ra \mcN$ is called a \emph{left Quillen bifunctor} if the induced $\mcP^{[1]}\otimes\mcQ^{[1]}\ra\mcN^{[1]\times[1]}$ preserves cofibrant objects and weak equivalences between them.

Let $\mcV$ be a cocomplete monoidal category. We say that $\mcM$ is a \emph{cofibration $\mcV$-category} if it is a cofibration category equipped with an action $\otimes \colon \mcV\otimes\mcM\ra\mcM$ (i.e.\ we do not require $\mcM$ to be enriched but rather tensored).

In this situation, we say that a map of $\mcV$ is \emph{flat} (with respect to this action) if the induced functor $i\otimes- \colon \mcM\ra\mcM^{[1]}$ is left Quillen. This implies that both $K \otimes -$ and $L \otimes -$ are left Quillen and thus, $K$ and $L$ are ``flat objects''. Suppose now that $\mcV$ is suitably cocomplete and that the action preserves colimits in the first variable. Then $\mcV$ together with flat maps becomes a category with cofibrations which is reduced and both
\[\otimes \colon \mcV \otimes \mcV \ra \mcV \quad \textrm{and} \quad \otimes \colon \mcV \otimes \mcM \ra \mcM,\]
the monoidal structure and the action, are left closed/left Quillen bifunctors. Regardless of $\mcM$, the canonical map $0\ra\bbone$ is always flat (and, more generally, the unit $\Set\ra\mcV$ takes monos to flats).

We say that a $\mcV$-category $\mcC$ is \emph{locally flat} if all the hom-objects $\mcC(c',c) \in \mcV$ are flat. In the case $\mcV=\Set$, all injective maps are flat and thus, an ordinary category is always locally flat and the same is true for the $\sSet$-enrichement. For general $\mcV$, not every $\mcV$-category is locally flat. This was our example in the introduction, where we were in fact talking about the $\Ch_\bbZ$-category with one object and endomorphisms forming $\bbZ/2$ (the full subcategory of $\Ch_\bbZ$ on the object $\bbZ/2$).

\section{Proof of Theorem~\ref{t:projective_structure}}\label{s:proof_main}

Let $\mcD$ be a small locally flat direct $\mcV$-category and $\mcD' \subseteq \mcD$ a direct subcategory. We say that a transformation $f \colon X \to Y$ is a \emph{weak equivalence away from $\mcD'$} if it is a pointwise weak equivalence whose restriction to $\mcD'$ is an isomorphism.

We consider the category of diagrams $X \colon \mcD \to \mcM$ with cofibrations away from $\mcD'$ and weak equivalences away from $\mcD'$. The resulting cofibrant replacement will be addressed as a \emph{cofibrant replacement away from $\mcD'$}.

The following example will be important in translating the factorization property into a cofibrant replacement that is technically easier to handle.

\begin{example}\label{e:arrow_category}
The case of $\mcD = [1]$ with $\mcD' = \{0\}$. Then the objects of $[\mcD,\mcM]$ are maps in $\mcM$. An object is cofibrant away from $\mcD'$ if and only if it is a cofibration. A cofibrant replacement of $f$ away from $\mcD'$ is therefore a factorization of $f$ into a cofibration $gh^{-1}$ followed by a weak equivalence $w$
\[\xymatrix{
\bullet \ar[r]^-h_-\cong \ar@{ >->}[d]_-g & \bullet \ar[d]^-f & & 0 \ar[d] \\
\bullet \ar[r]_-w^-\sim & \bullet & & 1
}\]
We observe that the existence of such a factorization requires the domain of $f$ to be good.

Another interesting example is that of $\mcD = \mcP(S)$ with $\mcD' = \mcP_0(S)$. Then a diagram $X \colon \mcD \to \mcM$ is a cube in $\mcM$. It is cofibrant away from $\mcD'$ if and only if the pushout corner exists and the pushout corner map is a cofibration. In particular, a diagram may only admit a cofibrant replacement if its pushout corner exists and is good.
\end{example}

\subsection*{Proof for direct categories}

All axioms are verified pointwise except for the factorization. For a map $f \colon X \to Y$ with $X$ pointwise good, the factorization is obtained, as in Example~\ref{e:arrow_category}, as a cofibrant replacement of the corresponding diagram $F \colon \mcD \otimes [1] \to \mcM$ away from $\mcD \otimes \{0\}$.

More generally, let $\mcC$ be a small locally flat direct $\mcV$-category and $\mcC'$ a direct subcategory; we denote by $\iota \colon \mcC' \to \mcC$ the inclusion. Let $F \colon \mcC \to \mcM$ be a diagram such that $\iota_! \iota^* F$ exists and is pointwise good away from $\mcC'$. We will produce a cofibrant replacement $G$ of $F$ away from $\mcC'$ as a colimit of a certain chain $G_n$; this will be constructed inductively together with a cocone $\lambda_n \colon G_n \to F$ in such a way that
\begin{itemize}
\item
	$G_{-1} = \iota_! \iota^* F$,
\item
	each $G_{n-1} \to G_n$ is a pushout of a coproduct of generating cofibrations $\mcC_c \otimes g_c$ for $c \in \mcC \smallsetminus \mcC'$ with $|c| = n$,
\item
	$\lambda_n \colon G_n \to F$ is a weak equivalence at all objects of degree $\leq n$.
\end{itemize}
This implies easily that the colimit $G = \colim G_n$ is indeed a cofibrant replacement of $F$ away from $\mcC'$ (since the maps $G_{k-1} \to G_k$ are isomorphisms at $\mcC_{(<k)}$, the component of $\lambda \colon G \to F$ at $c \in \mcC_{(n)}$ is essentially equal to $\lambda_n$ and thus a weak equivalence).

In the induction step, $G_{n-1}$ is cofibrant by Lemma~\ref{l:cofibrations_characterization} and thus, $G_{n-1}c$ is good. Thus, we may factor $G_{n-1}c \to Fc$ as
\[\xymatrix{
G_{n-1}c \ar@{ >->}[r]^-{g_c} & Gc \ar[r]^-{h_c}_-\sim & Fc.
}\]
Then we use the cofibrations $g_c$ to attach cells to $G_{n-1}$,
\[\xymatrix{
\topbox{\sum\limits_{c\in\mcC_{(n)}}{}}{\mcC_c \otimes G_{n-1}c}{} \ar[r] \ar[d] & G_{n-1} \ar[d] \\
\topbox{\sum\limits_{c\in\mcC_{(n)}}{}}{\mcC_c \otimes Gc}{} \ar[r] & G_n \po
}\]
to construct the object $G_n$; it admits an obvious map to $F$. At objects of degree $<n$, the diagrams on the left are zero and thus, the map on the right is an isomorphism; consequently, $G_n \to F$ is a weak equivalence at $\mcC_{(<n)}$. At objects of degree $n$, the top map is an isomorphism, hence also the bottom map; consequently, the map $G_nc \to Fc$ is essentially an inverse of $h_c$ and thus a weak equivalence at each $c \in \mcC_{(n)}$.\qed

\vskip 2\topsep

The difficulty of proving Theorem~\ref{t:projective_structure} for a general $\mcV$-category lies in the lack of a characterization as in Proposition~\ref{prop_projective_cofibrations_over_direct_categories}.

\subsection*{Proof for general categories}
Our strategy is to ``replace'' $\mcC$ by a direct category, use the factorization there and push the result back to $\mcC$. We define this ``direct replacement'' $\Delta\mcC$ of $\mcC$ in the following way. The objects of $\Delta\mcC$ are non-empty finite sequences $(c_0,\ldots,c_n)$ of objects of $\mcC$ and
\[\Delta\mcC\Big((c_0',\ldots,c_k'),(c_0,\ldots,c_n)\Big)=\Big(\sum_{\varphi(k)=n}\bbone\Big) + \Big(\sum_{\varphi(k)\neq n}\mcC(c'_k,c_n)\Big)\]
where the coproducts run over all injective monotone $\varphi \colon [k] \to [n]$ with $c'_i=c_{\varphi(i)}$ for all $i=0,\ldots,k$. This is a direct category with the degree function $(c_0,\ldots,c_n)\mapsto n$.

There is a canonical $\mcV$-functor $P \colon \Delta\mcC\ra\mcC$ sending $(c_0,\ldots,c_n)\mapsto c_n$ and which has the obvious effect on morphisms (in the case $\varphi(k)=n$, hence $c_k'=c_n$, the map $\bbone \to \mcC(c_k',c_n)$ is the unit map). To produce a factorization in $[\mcC,\mcM]$, we use the cofibration structure on $[\Delta\mcC,\mcM]$. Let $f \colon X\ra Y$ be a map in $[\mcC,\mcM]$ and let
\[\xymatrix{
P^*X \ar@{ >->}[r]^-g & Z \ar[r]^-h_-\sim & P^*Y
}\]
be a factorization of $P^*f$ in $[\Delta\mcC,\mcM]$. We apply the left Kan extension functor $P_!$ to obtain
\[\xymatrix{
P_!P^*X \ar[r]^-{P_!g} \ar[d]_-{\varepsilon_X} & P_!Z \ar[r]^-{P_!h} & P_!P^*Y \ar[d]^-{\varepsilon_Y} \\
X \ar[rr]_-f & & Y
}\]
This yields the required factorization provided that
\begin{itemize}
\item
	the counit maps $\varepsilon_X$, $\varepsilon_Y$ are isomorphisms,
\item
	the map $P_!g$ is a cofibration,
\item
	the map $P_!h$ is a weak equivalence.
\end{itemize}

It is known that the first condition is equivalent to the right adjoint $P^*$ being fully faithful. From a transformation $P^*Z' \to P^*Z$, one reconstructs the transformation $Z' \to Z$ in the following way: its components are $Z'c = P^*Z'(c) \to P^*Z(c) = Zc$; they are natural because $\mcC(c',c) \otimes Zc' \to Zc$ can be reconstructed as $\mcC(c',c) \otimes P^*Z(c') \to P^*Z(c',c) \xla\cong P^*Z(c)$. The second condition is the content of Proposition~\ref{p:left_Kan_extension_cofibrations}.

To prove the third claim, we will give an alternative description of the left Kan extension $P_!Z$, similar to the non-enriched description using comma categories. The role of the comma category is taken by an ordinary (non-enriched) category $\Delta_c\mcC$, whose objects are sequences of the form $(c_0,\ldots,c_{n-1},c)\in\Delta\mcC$ and whose morphisms are all injective monotone $\varphi \colon [k]\ra[n]$ satisfying $c'_i=c_{\varphi(i)}$. We will freely alternate between $\Delta_c\mcC$ and its associated $\mcV$-category. There is an obvious $\mcV$-functor $\Delta_c\mcC\ra\Delta\mcC$ that is the identity on morphisms $\varphi$ with $\varphi(k)=n$ and that sends $\varphi$ with $\varphi(k)\neq n$ to $\id_c\in\mcC(c,c)$ in the component corresponding to $\varphi$. For a diagram $Z \in [\Delta\mcC,\mcM]$, we denote by $Z_c$ its restriction to $\Delta_c\mcC$.

\begin{lemma}
There is a natural isomorphism $\colim Z_c \cong (P_! Z) c$.
\end{lemma}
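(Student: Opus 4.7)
The plan is to construct the isomorphism directly from the coend formula. Writing an object of $\Delta\mcC$ as $d = (c_0, \ldots, c_n)$, so that $Pd = c_n$, one has
\[(P_! Z)(c) = \int^{d \in \Delta\mcC} \mcC(Pd, c) \otimes Zd = \int^{(c_0, \ldots, c_n)} \mcC(c_n, c) \otimes Z(c_0, \ldots, c_n),\]
and I would produce mutually inverse maps $\alpha \colon \colim Z_c \to (P_! Z)(c)$ and $\beta \colon (P_! Z)(c) \to \colim Z_c$.

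For $\alpha$, I specify a cocone on $Z_c$ with vertex $(P_!Z)(c)$: given $d = (c_0, \ldots, c_{n-1}, c) \in \Delta_c\mcC$, the component $\alpha_d \colon Zd \to (P_! Z)(c)$ is the composite of $\id_c \otimes (-) \colon Zd \cong \bbone \otimes Zd \to \mcC(c, c) \otimes Zd$ with the inclusion of this summand into the coend. Cocone naturality for a morphism $\varphi \colon d' \to d$ of $\Delta_c\mcC$ follows from the coend relation associated with the image of $\varphi$ under the canonical functor $\Delta_c\mcC \to \Delta\mcC$, which is $\bbone$ if $\varphi(k) = n$ and $\id_c \in \mcC(c, c)$ otherwise.

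For $\beta$, given $d = (c_0, \ldots, c_n) \in \Delta\mcC$, write $d \cdot c = (c_0, \ldots, c_n, c) \in \Delta_c\mcC$. I define the component of $\beta$ on $\mcC(c_n, c) \otimes Zd$ using the map $\ell_d \colon \mcC(c_n, c) \to \Delta\mcC(d, d \cdot c)$ that includes $\mcC(c_n, c)$ as the summand corresponding to the inclusion $[n] \hookrightarrow [n+1]$, $i \mapsto i$. The $\mcV$-functoriality of $Z$ turns $\ell_d$ into $\mcC(c_n, c) \otimes Zd \to Z(d \cdot c)$, which I postcompose with the canonical $Z(d \cdot c) \to \colim Z_c$. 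Well-definedness on the coend requires a case analysis: for a morphism $\psi \colon d' \to d$ of $\Delta\mcC$ supported in one of the summands ($\bbone$ or $\mcC(c_k', c_n)$) of $\Delta\mcC(d', d)$, extending $\psi$ by the identity at the appended $c$ yields a morphism $d' \cdot c \to d \cdot c$ of $\Delta_c\mcC$ which witnesses the required equality in $\colim Z_c$.

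To check $\alpha$ and $\beta$ are inverse, observe that $\beta \circ \alpha$ applied to $z \in Zd$ with $d \in \Delta_c\mcC$ produces the image of $z$ under $Z(\ell_d(\id_c)) \colon Zd \to Z(d \cdot c)$, which agrees with $Z(\iota_d)(z)$ for the initial-segment inclusion $\iota_d \colon d \to d \cdot c$ in $\Delta_c\mcC$ (since the functor $\Delta_c\mcC \to \Delta\mcC$ sends $\iota_d$ to $\ell_d(\id_c)$), and hence has class $[z]$ in $\colim Z_c$; conversely, $\alpha \circ \beta$ on $f \otimes z$ recovers $f \otimes z$ via the coend relation for $\ell_d(f) \in \Delta\mcC(d, d \cdot c)$, whose image under $P$ equals $f$. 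Naturality in $c$ is immediate. The main obstacle is the case analysis ensuring well-definedness of $\beta$: one must separately handle the identity and non-identity summands in the morphism spaces of $\Delta\mcC$ and verify that the coend relations precisely correspond to the colimit identifications in $\Delta_c\mcC$.
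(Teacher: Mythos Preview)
Your proof is correct and uses the same pair of maps as the paper: the forward direction induced by the action of $P$ on morphisms, and the backward direction via the inclusion $\ell_d \colon \mcC(c_n,c) \hra \Delta\mcC(d, d\cdot c)$ at the summand indexed by $d^{n+1}$. The paper packages the argument more concisely by establishing the isomorphism at the level of weights, $\colim_{\Delta_c\mcC} \Delta\mcC(d,-) \cong \mcC(Pd,c)$ naturally in $d \in (\Delta\mcC)^\op$, and then observing that applying $(-)\otimes_{\Delta\mcC} Z$ to both sides yields the statement; this absorbs your cocone-naturality and coend-compatibility case analyses into a single naturality check in $\mcV$.
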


\begin{proof}
It is easy to see that the action of $P$ on morphsisms induces a $\mcV$-natural isomorphism
\[\colim\nolimits_{\Delta_c\mcC}\Delta\mcC((c_0,\ldots,c_n),-) \xlra\cong \mcC(P(c_0,\ldots,c_n),c)\]
(the inverse is the inclusion $\mcC(c_n,c) \to \Delta\mcC((c_0,\ldots,c_n),(c_0,\ldots,c_n,c))$ of morphisms corresponding to the monotone map $d^{n+1} \colon [n] \to [n+1]$). When viewed as functors of $(c_0,\ldots,c_n)$, the weight on the left produces $\colim Z_c$ and the weight on the right $(P_!Z)c$.
\end{proof}

Thus, we have a diagram
\[\xymatrix{
Z(c) \ar[r]^-\sim \ar[d] & \leftbox{(P^*Y)(c)}{{}=Yc} \ar[d] \\
\rightbox{(P_!Z)c \cong {}}{\colim Z_c} \ar[r] & \leftbox{\colim (P^*Y)_c}{{} \cong (P_!P^*Y)c}
}\]
whose vertical maps are components of the corresponding colimit cocones and we are left to show that they are weak equivalences. The diagram $(P^*Y)_c$ is constant and the category $\Delta_c\mcC$ connected, which implies that, in fact, the right map is an isomorphism. Likewise, the diagram $Z_c$ is homotopy constant and it can be shown that $\Delta_c\mcC$ has a contractible nerve so that the canonical map into the \emph{homotopy} colimit is a weak equivalence, see \cite{ChacholskiScherer}. However, $Z$ is not cofibrant in general, and therefore, we choose a different approach.

We consider a subcategory $\Delta^\top_c\mcC \subseteq \Delta_c\mcC$ whose morphisms are the injective monotone $\varphi \colon [k]\ra[n]$ in $\Delta_c\mcC$ that preserve the top element, i.e.~$\varphi(k)=n$. The main technical advantage of $\Delta^\top_c\mcC$ over $\Delta_c\mcC$ is that it has an initial object $(c)$. Let us consider the following diagram.
\[\xymatrix{
\Delta_c\mcC \ar[rd]_-\rho \ar[rr]^-\Id & & \Delta_c\mcC \\
& \Delta^\top_c\mcC \ar@{c->}[ur]_-\iota \POS[];[u];**{}?(.4)*++!L{\scriptstyle\tau}+/-3pt/*\dir{=}+/6.5pt/*\dir{>}
}\]
The functor $\rho \colon \Delta_c\mcC\ra\Delta^\top_c\mcC$ adds ``$c$'' at the end, $(c_0,\ldots,c_{n-1},c) \mapsto (c_0,\ldots,c_{n-1},c,c)$. The indicated natural transformation $\tau \colon \Id\Ra\iota\rho$ has
\[\tau_{(c_0,\ldots,c_{n-1},c)} = d^{n+1} \colon (c_0,\ldots,c_{n-1},c) \to (c_0,\ldots,c_{n-1},c,c).\]
For a diagram $Z \in [\Delta\mcC,\mcM]$, we denote $Z^\top_c=Z|_{\Delta^\top_c\mcC}$.

We will now compare $\colim Z_c$ with the more manageable $\colim Z_c^\top$ using the following diagram
\[\xymatrix{
Z(c) \ar[r]^-{d^1} \ar[d] & Z(c,c) \ar[r]^-\id \ar[d] & Z(c,c) \ar[r]^-\id \ar[d] & Z(c,c) \ar[d] \\
\colim Z_c \ar[r]_-{\tau_*} & \colim (Z\rho)_c \ar[r]_-{\rho_*} & \colim Z^\top_c \ar[r]_-{\iota_*} & \colim Z_c
}\]
in which the composition across the bottom row is the identity while the composition across the top is $d^1$. Since $d^1 \colon Z(c)\ra Z(c,c)$ is a weak equivalence ($Z$ is weakly equivalent to $P^*Y$ that satisfies this), we observe that $Z(c)\ra\colim Z_c$ is, up to a weak equivalence, a retract of $Z(c,c)\ra\colim Z^\top_c$. Since weak equivalences are saturated, it is enough to prove that the latter map is a weak equivalence.

\begin{lemma}
Let $\mcD$ be an ordinary direct category with an initial object $d_0$ in degree 0 and $D \colon \mcD\ra\mcM$ a homotopy locally constant diagram (all morphisms in the diagram are weak equivalences) cofibrant away from $d_0$, i.e.\ such that $L_dD \to Dd$ is a cofibration for every $|d|>0$. Then each component $Dd\ra\colim D$ of the colimit cocone is a weak equivalence.
\end{lemma}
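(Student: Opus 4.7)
The plan is a transfinite induction on the skeleton filtration, showing that $Dd_0 \to \colim \sk_n D$ is a trivial cofibration for every $n$. Since $\mcD$ is direct with initial object $d_0$, the latter is forced to be the unique object of degree $0$: any other degree-$0$ object $d'$ would admit a non-identity morphism $d_0 \to d'$, violating directness. Hence $\colim \sk_0 D \cong Dd_0$ and the base case is trivial.

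For the successor step $n \to n+1$, I will use the direct-category specialization of the pushout square \eqref{eq:skeleton_construction}
\[\xymatrix{
\sum_{|d|=n+1}L_d D \ar[r] \ar[d] & \colim \sk_n D \ar[d] \\
\sum_{|d|=n+1}Dd \ar[r] & \colim \sk_{n+1} D \po
}\]
and reduce to the claim that each latching map $L_d D \to Dd$ (for $|d|\ge 1$) is a \emph{trivial} cofibration. The left vertical is then a coproduct of trivial cofibrations, its pushout $\colim \sk_n D \to \colim \sk_{n+1} D$ is a trivial cofibration, and limit steps use that transfinite composites of trivial cofibrations in a cofibration category are again trivial cofibrations. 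Composing all these, $Dd_0 \to \colim D$ is a trivial cofibration; the lemma then follows for an arbitrary $d$ by two-out-of-three applied to $Dd_0 \to Dd \to \colim D$, using the weak equivalence $Dd_0 \to Dd$ from homotopy local constancy and the initiality of $d_0$.

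To prove the claim, I apply the lemma recursively to the latching category at $d$, consisting of non-identity maps into $d$: this is itself a direct category of strictly smaller maximum degree, and the canonical $d_0 \to d$ is an initial object in it (since $d_0$ is initial in $\mcD$ and the slice-triangle commutes automatically). The restriction of $D$ to this subcategory remains homotopy locally constant, and its latching objects coincide with the corresponding $L_{d'}D$ from the ambient $\mcD$, so it is cofibrant away from its initial object. By recursion, $Dd_0 \to L_d D$ is a weak equivalence; combining with $Dd_0 \to Dd$ and applying two-out-of-three yields that $L_d D \to Dd$ is a weak equivalence, hence the desired trivial cofibration.

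The main delicacy is the nested recursion on latching categories interleaved with the skeletal induction on $\colim \sk_n D$. Both are controlled by the degree function, and each latching category strictly decreases the supremum of degrees, so a single well-founded induction on degree governs everything. The only essentially non-formal verification needed is that restriction to a latching category preserves the cofibrancy hypothesis, which reduces to the identification of latching objects noted above.
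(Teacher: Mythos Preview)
Your proof is correct and follows the same strategy as the paper: both arguments show by induction on degree that the latching maps $L_dD \to Dd$ are trivial cofibrations (via recursion on the comma/latching category $\mcD_{(<n)}/d$, which inherits all the hypotheses), and then push this through the skeletal pushout square to conclude. You spell out a few points the paper leaves implicit---the uniqueness of the degree-$0$ object, the identification of latching objects under restriction, and the final 2-out-of-3 step for a general $d$---but the architecture is identical.
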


\begin{proof}
By induction, we may assume that each $L_dD \to Dd$ is a trivial cofibration for $|d|=n>0$, since the restriction of $D$ to each comma category $\mcD_{(<n)}/d$ also satisfies the assumptions of the lemma and $L_dD \cong \colim D|_{\mcD_{(<n)}/d}$. The induction step is summarized in the following pushout square
\[\xymatrixb{
\rightbox{\sum\limits_{d\in\mcD_{(n)}}}{L_dD} \ar[r] \POS[]*+{\vphantom{L_dD}} \ar@{ >->}[d]_-\sim & \colim \sk_{n-1}D \ar[d] \\
\rightbox{\sum\limits_{d\in\mcD_{(n)}}}{Dd} \ar[r] & \colim \sk_n D \po
}{
\rightbox{\sum\limits_{d\in\mcD_{(n)}}}{Dd} \ar[r] & \colim \sk_n D
}\qedhere\]
\end{proof}

We would like to apply this lemma to $Z^\top_c$. Since $Z_c^\top$ is weakly equivalent to $(P^*Y)_c^\top$, it is homotopy locally constant. The map $L_{(c_0,\ldots,c_n,c)} Z_c^\top \to Z_c^\top(c_0,\ldots,c_n,c)$ is obtained from $Z$ as a weighted colimit via a transformation of weights in $[\Delta\mcC^\op,\mcV]$ that will be described now.

Denoting the inclusion $j \colon \Delta_c^\top\mcC \to \Delta\mcC$, the second weight is simply $j_!((\Delta_c^\top\mcC)^{(c_0,\ldots,c_n,c)}) = \Delta\mcC^{(c_0,\ldots,c_n,c)}$ and consists of all morphisms $(c'_0,\ldots,c'_k) \to (c_0,\ldots,c_n,c)$. The first weight is given by the left Kan extension
\[j_!(\partial(\Delta_c^\top\mcC)^{(c_0,\ldots,c_n,c)}) \subseteq j_!((\Delta_c^\top\mcC)^{(c_0,\ldots,c_n,c)}) = \Delta\mcC^{(c_0,\ldots,c_n,c)}\]
and consists of exactly those morphisms $(c'_0,\ldots,c'_k) \to (c_0,\ldots,c_n,c)$ whose underlying monotone map $\varphi \colon [k] \to [n+1]$ misses at least one of $0,\ldots,n$.%
\footnote{
	The Yoneda isomorphism $\Delta_c^\top\mcC(-,(c_0,\ldots,c_n,c)) \otimes_{\Delta_c^\top\mcC} \Delta\mcC(?,j-) \xra\cong \Delta\mcC(?,(c_0,\ldots,c_n,c))$ (given by $j \otimes \id$ and composition) restricts to a map from $\partial\Delta_c^\top\mcC(-,(c_0,\ldots,c_n,c)) \otimes_{\Delta_c^\top\mcC} \Delta\mcC(?,j-)$ to the object described in the main text. The original map has an obvious pointwise inverse and this inverse restricts to an inverse.
}
Thus, the inclusion of weights is an inclusion of a sub-coproduct that misses some of the following summands: $\bbone$, corresponding to the identity, and $\mcC(c_n,c)$, corresponding to maps $(c_0,\ldots,c_n) \to (c_0,\ldots,c_n,c)$ that miss exactly the last object.

Consequently, the transformation is pointwise flat and, by Proposition~\ref{p:weighted_colimit_cofibrations}, the pushout corner map in
\[\xymatrix{
L_{(c_0,\ldots,c_n,c)} (P^*X)_c^\top \ar[r] \ar[d]_\cong & L_{(c_0,\ldots,c_n,c)} Z_c^\top \ar[d] \\
(P^*X)_c^\top(c_0,\ldots,c_n,c) \ar[r] & Z_c^\top(c_0,\ldots,c_n,c)
}\]
is a cofibration -- the hypotheses are satisfied since the map on the left is an isomorphism as indicated (because $(P^*X)_c^\top$ is constant). This also implies that the map on the right is the pushout corner map and is thus a cofibration as required.
\qed

\section{Proof of Theorem~\ref{t:weak_equivalences_generation}}

\subsection*{Proof of Theorem~\ref{t:weak_equivalences_generation}}

Let us denote the class from the statement by $\sfW_0$. The main idea of the proof is the following. For combinatorial model categories, trivial cofibrations are generated by the class
\[\{\mcC_c\otimes M\ra\mcC_c\otimes N\ |\ \textrm{$c\in\mcC$ and $M\ra N$ a trivial cofibration in $\mcM$}\}.\]
Since we suspect that this may be false for general cofibration categories, we will call the maps generated by this class \emph{good trivial cofibrations}. If $\mcC$ is locally flat, they are clearly trivial cofibrations and also belong to the class $\sfW_0$.

We construct a cofibrant replacement $\sfB\mcC_\bullet \otimes_{\Delta \otimes \mcC} \widetilde X$ of a pointwise cofibrant diagram $X$ in such a way that it turns pointwise trivial cofibrations to good trivial cofibrations. Finally, we prove that the augmentation $\sfB\mcC_\bullet \otimes_{\Delta \otimes \mcC} \widetilde X \to X$ belongs to $\sfW_0$ when $X$ is cofibrant.

The cofibrant replacement is produced from the (left Kan extension, restriction) adjunction
\[F \col [\delta\mcC,\mcM]\rightleftarrows[\mcC,\mcM] \loc U\]
with $\delta\mcC$ being a discrete category with the same set of objects. From this adjunction, we produce a comonad $FU$ and a ``bar construction'' functor
\[\sfB \colon [\mcC,\mcM]\ra\simp_\varepsilon[\mcC,\mcM]\]
into the category of augmented simplicial objects in $[\mcC,\mcM]$ with $\sfB_nX=(FU)^{n+1}X$ and whose augmentation $\sfB_0X=FUX\ra X=\sfB_{-1}X$ is the counit of the adjunction.

We apply this bar construction pointwise to the Yoneda embedding $\mcC_\bullet \colon \mcC^\op\ra[\mcC,\mcV]$ to obtain an augmented simplicial object $\sfB\mcC_\bullet \in \simp[\mcC^\op \otimes \mcC,\mcV]$ with
\[\sfB_n\mcC_\bullet = \sum_{c_0,\ldots,c_n\in\mcC} \mcC_{c_n} \otimes \mcC(c_0,\ldots,c_n) \otimes \mcC^{c_0}.\]
According to Proposition~\ref{p:BC_Reedy_cofibrant}, the diagram $\sfB\mcC_\bullet \in \simp[\mcC^\op \otimes \mcC,\mcV]$ is Reedy cofibrant.

Consider the pointwise tensor product
\[\otimes \colon [\mcC,\mcV] \otimes \mcM \lra [\mcC,\mcM]\]
that is clearly a left Quillen bifunctor; most importantly, this holds even if we equip the left hand-hand side with flat maps and (trivial) cofibrations and the right-hand side with (good trivial) cofibrations. Therefore, by Proposition~\ref{p:coend_Reedy}, the same is true for the induced functor
\[\otimes_{\Delta \otimes \mcC} \colon \simp[\mcC^\op\otimes\mcC,\mcV] \otimes \cosimp[\mcC,\mcM]_\pt \lra [\mcC,\mcM]\]
where the simplicial and cosimplicial directions are understood in the Reedy sense.

Let $\widetilde X \in \cosimp[\mcC,\mcM]_\pt$ be a frame on $X$ (see e.g.~\cite{Hovey} or \cite{Hirschhorn}). Our cofibrant replacement will be $\sfB\mcC_\bullet \otimes_{\Delta \otimes \mcC} \widetilde X$. Although Proposition~\ref{p:BC_c_Reedy_cofibrant} shows that the augmentation $\sfB\mcC_\bullet \otimes_{\Delta \otimes \mcC} \widetilde X \to X$ is a weak equivalence, we need to show that this map belongs to $\sfW_0$ and, to this end, we study this map for the particular diagrams appearing in the generating cofibrations, namely, when $X=\mcC_c\otimes M$. We consider a frame of the special form $\widetilde X=\mcC_c\otimes\widetilde M$, where $\widetilde M$ is a frame on $M$. For this particular choice, we get
\[\sfB\mcC_\bullet \otimes_{\Delta \otimes \mcC} \widetilde X \cong (\sfB\mcC_\bullet \otimes_\mcC \mcC_c) \otimes_\Delta \widetilde M \cong \sfB\mcC_c \otimes_\Delta \widetilde M\]
By Proposition~\ref{p:BC_c_Reedy_cofibrant}, $\sfB\mcC_c$ admits an extra degeneracy in such a way that the corresponding (augmented and extra degenerate) object is Reedy cofibrant. Thus, according to Proposition~\ref{p:extra_degenerate_realization}, the augmentation map
\[\sfB\mcC_c\otimes_\Delta\widetilde M\lra\mcC_c\otimes M\]
admits a section by a good trivial cofibration; in particular, this augmentation map lies in $\sfW_0$.

Now consider a strong cofibration $M \to N$ and, starting from a frame $\widetilde M$, construct a strong cofibration of frames $\widetilde M \to \widetilde N$ as in the following diagram
\[\xymatrix{
\Delta_0\cdot M \ar@{ >->}[r] \ar@{ >->}[d] & \widetilde M \ar[rr]^-\sim \ar@{ >->}[d] & & \const_\Delta M \ar[d] \\
\Delta_0\cdot N \ar@{ >->}[r] & \bullet \po \ar@{ >->}[r] & \widetilde N \ar[r]_-\sim & \const_\Delta N
}\]
(first form the indicated pushout and then factor the map from this pushout to $\const_\Delta N$ into a Reedy cofibration followed by a weak equivalence). We obtain a square
\[\xymatrix{
\sfB\mcC_c\otimes_\Delta\widetilde M \ar[r] \ar@{ >->}[d] & \mcC_c\otimes M \ar@{ >->}[d] \\
\sfB\mcC_c\otimes_\Delta\widetilde N \ar[r] & \mcC_c\otimes N
}\]
whose pushout corner map also lies in $\sfW_0$. In addition, it is a map between cofibrant objects of the under category $\mcC_c \otimes M \backslash \mcM$ and is thus preserved by pushouts along maps with domain $\mcC_c \otimes M$.

Let us study the augmentation $\sfB\mcC_\bullet \otimes_{\Delta \otimes \mcC} \widetilde X \to X$ in the case that $X$ is cofibrant. Let $0\ra X$ be given as a transfinite composite of a chain $X_n$ in which each map $X_{n-1}\ra X_n$ is a pushout of a generating cofibration $\mcC_c\otimes M\ra\mcC_c\otimes N$; by Proposition~\ref{p:generating_strong_cofibrations}, we may assume $M=X_{n-1}c$. We obtain a compatible frame on $M$ from that on $X_{n-1}$ by setting $\widetilde M=\widetilde X_{n-1}c$. Next we construct a compatible frame for $N$ as above. It is easy to see that the pushout in the diagram
\[\xymatrix{
\mcC_c\otimes\widetilde M \ar[r] \ar@{ >->}[d] & \widetilde X_{n-1} \ar@{ >->}[d] \\
\mcC_c\otimes\widetilde N \ar[r] & \widetilde X_n \po
}\]
is a frame for $X_n$. Then for the right face of
\[\xymatrix@!=1.5pc{
& \mcC_c\otimes M \ar[rr] \ar@{ >->}[dd]|!{[dl];[dr]}\hole & & X_{n-1} \ar@{ >->}[dd] \\
\sfB\mcC_c \otimes_\Delta \widetilde M \ar[rr] \ar@{ >->}[dd] \ar[ur] & & \sfB\mcC_\bullet \otimes_{\Delta \otimes \mcC} \widetilde X_{n-1} \ar@{ >->}[dd] \ar[ur] & \\
& \mcC_c\otimes N \ar[rr]|!{[ur];[dr]}\hole & & X_n \poxy & \\
\sfB\mcC_c \otimes_\Delta \widetilde N \ar[rr] \ar[ur] & & \sfB\mcC_\bullet \otimes_{\Delta \otimes \mcC} \widetilde X_n \ar[ur] \poxy
}\]
the map from the pushout to $X_n$ also belongs to $\sfW_0$ (as a pushout of a map from $\sfW_0$). Therefore, if inductively $\sfB\mcC_\bullet \otimes_{\Delta \otimes \mcC} \widetilde X_{n-1}\ra X_{n-1}$ belongs to $\sfW_0$, so does $\sfB\mcC_\bullet \otimes_{\Delta \otimes \mcC} \widetilde X_n\ra X_n$. Taking colimit over $n$, we see that $\sfB\mcC_\bullet \otimes_{\Delta \otimes \mcC} \widetilde X\ra X$ also belongs to $\sfW_0$.

For each cofibrant diagram $X$, we have constructed a frame $\widetilde X$ such that the augmentation $\sfB\mcC_\bullet \otimes_{\Delta \otimes \mcC} \widetilde X\ra X$ belongs to $\sfW_0$. Next we prove that, in fact, any weak equivalence between cofibrant objects belongs to $\sfW_0$. Therefore, let $f \colon X\ra Y$ be a weak equivalence between cofibrant objects and let $\widetilde X$ and $\widetilde Y$ be frames for which the augmentations belong to $\sfW_0$ as above. Construct the indicated pushout in the square
\[\xymatrix{
\Delta_0\cdot X \ar@{ >->}[r] \ar[d]^\sim & \widetilde X \ar[rr]^-\sim \ar[dd] \ar@/^10pt/[ddr]^\sim & & \const_\Delta X \ar[d]^\sim \\
\Delta_0\cdot Y \ar@{ >->}[d] & & & \const_\Delta Y \\
\widetilde Y \ar@{ >->}[r] \ar@/_20pt/[rr]_\sim & \bullet \po \ar@{ >->}[r] & \widetilde Z \ar[ur]_\sim
}\]
and factor the canonical map from this pushout to $\const_\Delta Y$ into a Reedy cofibration followed by a weak equivalence. The middle object $\widetilde Z$ then becomes another frame on $Y$. It is connected to $\widetilde X$ and $\widetilde Y$ by weak equivalences. Thus, upon applying $\sfB\mcC_\bullet \otimes_{\Delta \otimes \mcC} -$, we obtain the following diagram
\[\xymatrix{
\sfB\mcC_\bullet \otimes_{\Delta \otimes \mcC} \widetilde X \ar[r]^-\sim \ar[d]_\sim & \sfB\mcC_\bullet \otimes_{\Delta \otimes \mcC} \widetilde Z \ar[d] & \sfB\mcC_\bullet \otimes_{\Delta \otimes \mcC} \widetilde Y \ar[l]_-\sim \ar[d]^\sim \\
X \ar[r]_f & Y & Y \ar@{=}[l]
}\]
with the indicated maps lying in $\sfW_0$. This finishes the proof of the theorem.
\qed

\section{Simplicial and cosimplicial diagrams}

In this section, the simplicial and cosimplicial diagrams are understood to be equipped with the Reedy structures.

\begin{proposition}\label{p:BC_Reedy_cofibrant}
Let $\mcC$ be a small locally flat $\mcV$-category. Then $\sfB \mcC_\bullet$ is Reedy cofibrant as an object of $\simp[\mcC^\op \otimes \mcC,\mcV]$, where $\mcV$ is equipped with flat maps.
\end{proposition}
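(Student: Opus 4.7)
My approach is to analyze the latching map $L_n \sfB\mcC_\bullet \to \sfB_n \mcC_\bullet$ pointwise in $(c^{**}, d) \in \mcC^\op \otimes \mcC$, and to exploit the categorical structure of $\mcC$ to show each summand of this map is flat in $\mcV$. Writing
\[\sfB_n \mcC_\bullet(c^{**}, d) = \sum_{c_0, \ldots, c_n} \mcC(c_n, d) \otimes \mcC(c_{n-1}, c_n) \otimes \cdots \otimes \mcC(c_0, c_1) \otimes \mcC(c^{**}, c_0),\]
I observe that the degeneracies $s_i$, arising from the comonad comultiplication $F\eta U$, land in summands where $c_i = c_{i+1}$, acting by inserting the unit $\sigma \colon \bbone \to \mcC(c_i, c_i)$ at the duplicated tensor position. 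Grouping the summands of $\sfB_n \mcC_\bullet$ according to the set $D = \{i : c_{i-1} = c_i\}$ of adjacent coincidences, the contribution to the latching at each such summand is, by the cubical pushout-corner formula from Section~\ref{s:cubical_diagrams}, the pushout-corner of a $D$-cubical diagram whose generating edges are $\sigma$-insertions at positions in $D$, tensored with the fixed flat hom-object factors at positions outside $D$.

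The key step, which I expect to be the main obstacle, is to show that each such edge---a map of the form ``(flat factors) $\otimes\, \sigma \,\otimes$ (flat factors)'' in $\mcV$---is flat. The argument will use the unit axiom of $\mcC$: either of the composition maps $\mcC(c_{i-1}, c_i) \otimes \mcC(c_i, c_{i+1}) \to \mcC(c_{i-1}, c_{i+1})$ flanking the inserted $\sigma$ yields, after identifying $c_{i-1} = c_i$, a retraction for the $\sigma$-insertion, so each edge is a split injection. Its cofiber is a retract of the flat target (a tensor product of flat hom-objects), hence itself flat, and a split injection between flat objects with flat cofiber is a flat map in $\mcV$. Thus each edge of each cube is flat, even though $\sigma$ on its own need not be.

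Finally, iterating the pushout-product formula $\pcm(X \otimes Y) \cong \pcm(\pcm X \otimes \pcm Y)$ from Section~\ref{s:cubical_diagrams}, the pushout-corner of each $D$-cube is flat, and coproducts of flat maps in $\mcV$ remain flat. Thus at every $(c^{**}, d)$ the latching map $L_n\sfB\mcC_\bullet(c^{**}, d) \to \sfB_n\mcC_\bullet(c^{**}, d)$ is flat, which is exactly the pointwise flatness in $[\mcC^\op \otimes \mcC, \mcV]$ required for Reedy cofibrancy of $\sfB\mcC_\bullet$. The bookkeeping of organizing the $D$-cubes correctly and verifying the splittings across overlapping degenerate images is delicate but follows the cubical machinery of Section~\ref{s:cubical_diagrams}.
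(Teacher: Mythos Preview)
Your overall strategy---organizing the latching map as a coproduct over tuples $(c_0,\ldots,c_n)$ and identifying each summand with the pushout-corner of a cube built from unit insertions---matches the paper's, but the execution diverges in two places and one of them is a real problem.

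First, you work pointwise at $(c^{**},d)$ and conclude only that the latching map is \emph{pointwise} flat. The paper instead keeps the outer factors as the bifunctor $\mcC(c_n,-)\otimes\mcC(-,c_0)\cong(\mcC^\op\otimes\mcC)_{(c_0,c_n)}$, so that each summand of the latching map is a generating \emph{projective} cofibration in $[\mcC^\op\otimes\mcC,\mcV]$. Reedy cofibrancy here is with respect to the projective structure (this is how the proposition is used later via Proposition~\ref{p:coend_Reedy}), so pointwise flatness does not suffice. This is easy to repair: simply do not evaluate, and carry the representable factor along.

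Second---and this is the genuine gap---your splitting argument for flatness of the cube edges does not go through. The paper does not argue this way at all: it writes the full $n$-cube as $X_n\otimes\cdots\otimes X_1$, where each $X_i$ is the $1$-cube $0\to\mcC(c_{i-1},c_i)$ when $c_{i-1}\neq c_i$ and the unit $\bbone\to\mcC(c_i,c_i)$ when $c_{i-1}=c_i$, and uses directly that each $X_i$ is a flat map; the iterated pushout product is then flat because $\otimes\colon\mcV\otimes\mcV\to\mcV$ is left closed for flat maps. Your alternative route asserts that ``a split injection between flat objects with flat cofibre is a flat map in $\mcV$'', but this is not valid in a general cofibration category: cofibrations in $\mcM$ (hence flat maps in $\mcV$) are not assumed closed under retracts, and a split monomorphism need not exhibit its target as a coproduct of source and cofibre. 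Likewise your claim that the cofibre is flat because it is a retract of a flat object relies on the same unavailable retract-closure. So even granting the retractions coming from composition in $\mcC$, you cannot conclude flatness of the edges this way. The paper's argument sidesteps all of this by treating each $X_i$ as a flat map from the outset and never invoking retractions.
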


\begin{proof}
We need to compute the latching objects of $\sfB \mcC_\bullet$ and their maps into $\sfB \mcC_\bullet$. These are clearly certain coproducts
\[L_n \sfB \mcC_\bullet = \sum_{c_0,\ldots,c_n} \mcC_\deg(-,c_0,\ldots,c_n,-),\]
which we will now describe. Consider the 1-cubes in $\mcV$
\[X_i = \begin{cases}
0\ra\mcC(c_{i-1},c_i) & \textrm{ if }c_{i-1}\neq c_i \\
\bbone\ra\mcC(c_{i-1},c_i) & \textrm{ if }c_{i-1}=c_i.
\end{cases}\]
They clearly depend on $c_0,\ldots,c_n$ but we will not reflect this in the notation. The map in question $L_n \sfB \mcC_\bullet \to \sfB_n \mcC_\bullet$ is then isomorphic to
\[\sum_{c_0,\ldots,c_n}\mcC(c_n,-) \otimes \pcm(X_n \otimes \cdots \otimes X_1) \otimes \mcC(-,c_0).\]
Since each $X_i$ is flat in $\mcV$ and $-\otimes-\col\mcV\otimes\mcV\to\mcV$ is left closed with respect to flat maps, the claim is proved by $\mcC(c_n,-) \otimes \mcC(-,c_0) \cong (\mcC^\op \otimes \mcC)_{(c_0,c_n)}$.
\end{proof}

One may view an \emph{augmented simplicial object} as a map $\varepsilon \colon X\ra\Delta^0\cdot X_{-1}$ of simplicial objects. An \emph{extra degeneracy} $s_{-1}$ then provides a section of this map, namely, in degree $n$, the component $X_{-1}\ra X_n$ is the composition of the extra degeneracy maps. It is convenient to organize an augmented simplicial object with extra degeneracy into a diagram $\widehat X \colon \Delta_\bot^\op\ra\mcM$. The category $\Delta_\bot$ is isomorphic to a subcategory of $\Delta$ of those monotone maps which preserve the bottom element. We will however think of the objects of $\Delta_\bot$ as objects of $\Delta$ with a bottom element adjoined and called $-1$. Thus $[n]_+$ will stand for the ordered set $\{-1,0,\ldots,n\}$ and $X_n$ for $X[n]_+$. We observe that $\Delta_\bot$ has an initial and terminal object $[-1]_+$. The unique map $[-1]_+\ra[n]_+$ yields exactly the augmentation $X_n\ra X_{-1}$ while the extra degeneracy $X_{-1}\ra X_n$ is the image of the unique map $[n]_+\ra[-1]_+$.

\begin{proposition}\label{p:BC_c_Reedy_cofibrant}
Let $\mcC$ be a small locally flat $\mcV$-category. Then $\sfB \mcC_c$ is Reedy cofibrant as an object of $\simp_\bot[\mcC,\mcV]$, where $\mcV$ is equipped with flat maps.
\end{proposition}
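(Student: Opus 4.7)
The plan is to follow the strategy of Proposition~\ref{p:BC_Reedy_cofibrant}, making the small modifications needed to account for the extra initial object $[-1]_+$ of $\Delta_\bot$ and for the additional factor introduced by the extra degeneracy. In the Reedy structure on $\Delta_\bot^\op$, the object $[-1]_+$ sits in the strictly lowest degree, so $L_{-1}\sfB\mcC_c = 0$ and the latching condition at that stage is just pointwise flatness of $\sfB_{-1}\mcC_c = \mcC_c = \mcC(c,-)$, which is guaranteed by local flatness of $\mcC$.

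For $n \geq 0$ the formula
\[
\sfB_n\mcC_c \;=\; \sum_{c_0,\ldots,c_n} \mcC_{c_n} \otimes \mcC(c_{n-1},c_n) \otimes \cdots \otimes \mcC(c_0,c_1) \otimes \mcC(c,c_0)
\]
differs from that of $\sfB_n\mcC_\bullet$ in Proposition~\ref{p:BC_Reedy_cofibrant} only by replacing the contravariant factor $\mcC^{c_0}$ with the fixed $\mcC(c,c_0)$ and, crucially, by allowing an additional collapsed slot at the bottom coming from the extra degeneracy $\sfB_{-1}\mcC_c \to \sfB_0\mcC_c$. I would therefore recycle the latching-object computation of that proposition, using the same $1$-cubes
\[
X_i \;=\; \begin{cases} 0 \to \mcC(c_{i-1},c_i), & c_{i-1} \neq c_i,\\ \bbone \to \mcC(c_{i-1},c_i), & c_{i-1} = c_i, \end{cases}
\qquad i=1,\ldots,n,
\]
and introducing one further $1$-cube
\[
X_0 \;=\; \begin{cases} 0 \to \mcC(c,c_0), & c_0 \neq c,\\ \bbone \to \mcC(c,c_0), & c_0 = c, \end{cases}
\]
encoding the extra-degeneracy slot. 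The latching map then identifies with
\[
\sum_{c_0,\ldots,c_n} \mcC_{c_n} \otimes \pcm(X_n \otimes \cdots \otimes X_1 \otimes X_0) \;\lra\; \sfB_n\mcC_c,
\]
and since each $X_i$ is flat (by local flatness of $\mcC$ together with flatness of $0 \to \bbone$) and $\otimes \colon \mcV \otimes \mcV \to \mcV$ is left closed on flat maps, the pushout-corner map is flat; tensoring pointwise with the pointwise flat $\mcC_{c_n}$ and taking coproducts preserves pointwise flatness, which is the required Reedy flatness.

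The only step requiring genuine work, as opposed to rewriting what has already been done, is verifying this latching-object formula against the Reedy structure on $\Delta_\bot^\op$: concretely, one must identify the colimit over non-identity bottom-preserving surjections $[n]_+ \twoheadrightarrow [k]_+$ (for $-1 \leq k < n$) with the iterated pushout-corner expression above. This reduces to the combinatorial observation that a bottom-preserving monotone surjection $[n]_+ \twoheadrightarrow [k]_+$ is determined by choosing which consecutive blocks in $\{0,\ldots,n\}$ to collapse together and, independently, whether to collapse a leading block all the way down to $-1$ — the latter degree of freedom being exactly what contributes the additional cube $X_0$. Once this identification is in place, the remainder of the proof is literally the pushout-corner argument of Proposition~\ref{p:BC_Reedy_cofibrant}.
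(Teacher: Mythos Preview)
Your proposal is correct and follows essentially the same approach as the paper: the paper's proof is a one-line reference back to Proposition~\ref{p:BC_Reedy_cofibrant}, recording only the modified latching-map formula $\sum_{c_0,\ldots,c_n}\mcC(c_n,-) \otimes \pcm(X_n \otimes \cdots \otimes X_1 \otimes X_0)$, and your argument unpacks exactly this, including the implicit definition of the extra cube $X_0$ and the base case at $[-1]_+$. One small wording point: what you call ``pointwise flatness'' at the end should really be that each summand $\mcC_{c_n}\otimes\pcm(\cdots)$ is a generating projective cofibration in $[\mcC,\mcV]$, matching how the paper concludes the analogous step in Proposition~\ref{p:BC_Reedy_cofibrant}.
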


\begin{proof}
The proof is similar to the previous one but this time $L_{n_+} \sfB \mcC_\bullet \to \sfB_{n_+} \mcC_\bullet$ is isomorphic to
\[\sum_{c_0,\ldots,c_n}\mcC(c_n,-) \otimes \pcm(X_n \otimes \cdots \otimes X_1 \otimes X_0).\qedhere\]
\end{proof}

A \emph{frame} on an object $W^{-1} \in \mcQ$ is a factorization in $\cosimp\mcQ$
\begin{equation}\label{eq:frame}
\Delta_0 \cdot W^{-1} \rightarrowtail W \simarrow \const_{\Delta} W^{-1}
\end{equation}
of the canonical map into a Reedy cofibration followed by a weak equivalence.

\begin{proposition}\label{p:extra_degenerate_realization}
Let $W$ be a frame on a cofibrant object $W^{-1}$ and $X$ a simplicial object augmented by $X_{-1}$ and equipped with extra degeneracy in such a way that the corresponding diagram $\widehat X\in\simp_\bot\mcP$ (with the augmentation and the extra degeneracies added in) is Reedy cofibrant. Then the map
\[X\otimes_\Delta W \lra X_{-1}\otimes W^{-1},\]
obtained by tensoring the augmentation $\varepsilon $of $X$ with the weak equivalence in \eqref{eq:frame}, admits a section by a trivial cofibration.
\end{proposition}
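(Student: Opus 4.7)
The natural candidate for the section is the composite
\[\sigma\col X_{-1}\otimes W^{-1} \xra{s_{-1}\otimes\id} X_0\otimes W^{-1} \cong X\otimes_\Delta(\Delta_0\cdot W^{-1}) \lra X\otimes_\Delta W,\]
where $s_{-1}\col X_{-1}\to X_0$ is the extra degeneracy (the image under $\widehat X$ of the unique map $[0]_+\to[-1]_+$ in $\Delta_\bot$) and the last map is induced by the frame inclusion $\Delta_0\cdot W^{-1}\rightarrowtail W$. I would first check $\varepsilon\circ\sigma=\id$ by unpacking: the $0$-level of the augmentation $\varepsilon$ is a face $X_0\to X_{-1}$ which retracts $s_{-1}$ by the standard $\Delta_\bot$-identity, while the level-$0$ composite $W^{-1}\to W^0\to W^{-1}$ of the frame factorization is $\id_{W^{-1}}$ since $\Delta_0\cdot W^{-1}\to\const_\Delta W^{-1}$ is itself the identity in degree $0$.

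To see $\sigma$ is a cofibration, I factor it as above. Reedy cofibrancy of $\widehat X\col\Delta_\bot^\op\to\mcP$ forces its latching map at $[0]_+$ to be a cofibration between cofibrant objects, and this latching map is precisely $s_{-1}$ (the latching object being $X_{-1}$, since in $\Delta_\bot$ the only nontrivial surjective bottom-preserving map out of $[0]_+$ is the one to $[-1]_+$); tensoring with the cofibrant $W^{-1}$ through the left Quillen bifunctor $\otimes$ preserves this. The second arrow is a cofibration by Proposition~\ref{p:coend_Reedy} applied to the Reedy cofibration $\Delta_0\cdot W^{-1}\rightarrowtail W$ in $\cosimp\mcQ$ and the simplicial Reedy cofibrant object $\iota^*\widehat X=X$ (whose Reedy cofibrancy on $\Delta^\op$ is inherited from Reedy cofibrancy of $\widehat X$ on $\Delta_\bot^\op$).

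The main obstacle is establishing that $\sigma$ is a weak equivalence; by $\varepsilon\circ\sigma=\id$ and 2-out-of-3, this reduces to showing $\varepsilon$ is a weak equivalence, or equivalently that $\sigma\circ\varepsilon\sim\id_{X\otimes_\Delta W}$. The strategy is to exploit the $\Delta_\bot$-structure as contraction data on $\widehat X$: the natural transformations $\id\Rightarrow\const_{[-1]_+}$ and $\const_{[-1]_+}\Rightarrow\id$ available in $\Delta_\bot$ (coming from terminality and initiality of $[-1]_+$) assemble, via $\widehat X$, into a simplicial cylinder for $\widehat X$ that interpolates between $\id_{\widehat X}$ and the composition through the augmentation. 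Realizing this cylinder against the cosimplicial frame $W$ by Proposition~\ref{p:coend_Reedy} produces an honest homotopy in $\mcN$ between $\id_{X\otimes_\Delta W}$ and $\sigma\circ\varepsilon$; the frame provides exactly the homotopical flexibility needed to pass from the strict simplicial homotopy to a genuine one, while Reedy cofibrancy of $\widehat X$ ensures every stage stays inside the cofibration structure. The technical heart is to verify that this cylinder is itself Reedy cofibrant so that Proposition~\ref{p:coend_Reedy} applies.
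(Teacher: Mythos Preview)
Your section $\sigma$ is the same map the paper constructs, and your verification that $\varepsilon\circ\sigma=\id$ and that $s_{-1}\otimes\id$ is a cofibration are fine. However, the remainder of your argument contains two real gaps.

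First, your cofibrancy argument for the second factor $X\otimes_\Delta(\Delta_0\cdot W^{-1})\to X\otimes_\Delta W$ invokes Proposition~\ref{p:coend_Reedy}, which requires $X=\iota^*\widehat X$ to be Reedy cofibrant in $\simp\mcP$. You assert this is ``inherited'' from Reedy cofibrancy of $\widehat X$ on $\Delta_\bot^\op$, but restriction along $\iota$ does not automatically preserve Reedy cofibrancy: the latching object $L_{[n]_+}\widehat X$ is strictly larger than $L_nX$ (it sees the extra degeneracies), so knowing $L_{[n]_+}\widehat X\to X_n$ is a cofibration does not immediately give that $L_nX\to X_n$ is one. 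The statement is true, but it needs a separate inductive argument comparing the two latching diagrams.

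Second, and more seriously, your weak-equivalence argument is only a sketch whose ``technical heart'' you explicitly leave open. Turning the $\Delta_\bot$-contraction into an actual homotopy $\sigma\varepsilon\sim\id$ in $\mcN$ requires producing a concrete cylinder object, identifying its realization against $W$, and verifying Reedy cofibrancy throughout --- none of which you carry out. This is not a minor omission: it is the substance of the proposition.

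The paper avoids both difficulties by transferring the entire coend to $\Delta_\bot$ via the left Kan extension $\widehat W=\iota_!W$, so that $X\otimes_\Delta W\cong\widehat X\otimes_{\Delta_\bot}\widehat W$. One then checks that $\widehat W$ is Reedy cofibrant (its latching maps are horn inclusions $\Lambda^{n+1}_0\hookrightarrow\Delta^{n+1}$ tensored with $W$) and that the section is induced by a Reedy \emph{trivial} cofibration of weights $(\Delta_\bot)_{-1}\cdot W^{-1}\to\widehat W$. The left Quillen bifunctor $-\otimes_{\Delta_\bot}-$ then yields a trivial cofibration in one stroke, with no separate homotopy argument needed. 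This is both shorter and avoids the inheritance question entirely, since Reedy cofibrancy of $\widehat X$ on $\Delta_\bot^\op$ is used directly.
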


\begin{proof}
There is a canonical functor (the adjoining of the bottom element $-1$)
\[\iota=(-)_+ \colon \Delta\ra\Delta_\bot.\]
The simplicial object $X$ is obtained as $X=\iota^*\widehat X$. Let $\widehat W\in\cosimp_\bot\mcQ$ denote the left Kan extension $\widehat W=\iota_!W$. Explicitly,
\[\widehat W^n=\int\nolimits^{[k]\in\Delta}\Delta_\bot([k]_+,[n]_+)\cdot W^k\cong\int\nolimits^{[k]\in\Delta}\Delta([k],[n+1])\cdot W^k\cong W^{n+1},\]
and it is thus isomorphic to the restriction of $W$ along the embedding $\Delta_\bot\hra\Delta$, showing that $\widehat W$ is homotopy locally constant. Next, we will show that it is Reedy cofibrant.

To this end, we observe that $j^n \otimes_{\Delta_\bot} \iota_!W \cong \iota^*j^n \otimes_\Delta W$ and that $\iota^*j^n$ is isomorphic to the inclusion $\Lambda^{n+1}_0\ra\Delta^{n+1}$ of the $0$-th horn in $\Delta^{n+1}$. Since any monomorphism is generated frmo $\partial\Delta^k\ra\Delta^k$, we see that $\iota^* j^n$ is a Reedy cofibration and Proposition~\ref{p:coend_Reedy} shows that the latching maps for $W$ are indeed cofibrations.

We may now transfer all the coends to $\Delta_\bot$ since
\[X\odot_\Delta W\cong\widehat X\odot_{\Delta_\bot}\widehat W.\]
The map from the statement is induced by a weak equivalence $\widehat W \to \const_{\Delta_\bot} W^{-1}$.

The section is induced by the transformation of ``weights''
\[(\Delta_\bot)_{-1}\cdot W^{-1} \lra (\Delta_\bot)_{-1}\cdot W^0 = \sk^{-1} \widehat W \lra \widehat W\]
which is a composition of two Reedy trivial cofibrations. Since $-\odot_{\Delta_\bot}-$ is left Quillen with respect to the Reedy structures and $\widehat X$ is Reedy cofibrant, the claim follows.
\end{proof}

\begin{lemma}\label{l:coend_restriction_extension}
If $\alpha \colon \mcC\ra\mcD$ is a functor and $F \colon \mcD^\op \otimes \mcC \ra \mcM$ a bifuntor then
\[\smallint\nolimits^\mcC(\alpha\otimes\id)^*F\cong\smallint\nolimits^\mcD(\id\otimes\alpha)_!F\]
Moreover the left Kan extension $(\id\otimes\alpha)_!F$ may be computed as
\[\mcD^\op\xra{F}[\mcC,\mcM]\xra{\alpha_!}[\mcD,\mcM]\]
\end{lemma}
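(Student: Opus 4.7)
The plan is to prove the ``moreover'' part first and use it to derive the main isomorphism via a Fubini-type argument combined with the co-Yoneda lemma. The whole lemma is a formal consequence of the pointwise formula for enriched left Kan extensions together with the (enriched) Fubini theorem for coends, both of which are in Kelly's book.

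For the ``moreover'' part, I would observe that the functor $\id \otimes \alpha \colon \mcD^\op \otimes \mcC \to \mcD^\op \otimes \mcD$ is the identity on the first factor. Thus left Kan extension along it can be computed ``pointwise in $\mcD^\op$'': for each fixed $d \in \mcD^\op$, one applies $\alpha_! \colon [\mcC, \mcM] \to [\mcD, \mcM]$ to the functor $F(d, -)$. Formally, this follows from the universal property, since the adjunction
\[[\mcD^\op \otimes \mcD, \mcM]((\id \otimes \alpha)_! F, G) \cong [\mcD^\op \otimes \mcC, \mcM](F, G \circ (\id \otimes \alpha))\]
for a test bifunctor $G$ decomposes, upon currying both sides through $\mcD^\op$, into
\[[\mcD, \mcM](\alpha_! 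F(d, -), G(d, -)) \cong [\mcC, \mcM](F(d, -), G(d, -) \circ \alpha)\]
natural in $d$; this identifies $(\id \otimes \alpha)_! F$ with $\alpha_! \circ F$ as claimed.

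Combining this with the pointwise (coend) formula for $\alpha_!$ yields
\[((\id \otimes \alpha)_! F)(d, d') \cong \int^{c \in \mcC} \mcD(\alpha c, d') \otimes F(d, c).\]
Taking the coend of this over the diagonal $\mcD$ gives
\[\smallint^\mcD (\id \otimes \alpha)_! F \cong \int^{d \in \mcD} \int^{c \in \mcC} \mcD(\alpha c, d) \otimes F(d, c).\]
Applying the enriched Fubini theorem to swap the order of the coends, followed by the co-Yoneda lemma to collapse the inner coend over $\mcD$, gives
\[\int^{c \in \mcC} \int^{d \in \mcD} \mcD(\alpha c, d) \otimes F(d, c) \cong \int^{c \in \mcC} F(\alpha c, c) = \smallint^\mcC (\alpha \otimes \id)^* F,\]
which is the desired isomorphism.

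I expect no genuine obstacle: the argument is purely formal manipulation. The only point requiring minor care is that Fubini and co-Yoneda must be applied in the enriched setting over $\mcV$, which is a standard result (and used implicitly throughout the paper). One could also present the whole proof in a single stroke by noting that both sides represent, via Yoneda in $\mcM$, the same functor on $\mcM$, but the step-by-step computation above makes the naturality in $F$ and $\alpha$ manifest.
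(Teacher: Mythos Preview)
Your argument is correct: the pointwise formula for $(\id\otimes\alpha)_!$, Fubini, and co-Yoneda are exactly the right ingredients, and your currying argument for the ``moreover'' part is clean.

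The paper organizes the same ingredients differently. Rather than expanding everything into iterated coends and collapsing via co-Yoneda, it first isolates the pull--push identity $W\otimes_\mcC\alpha^*X\cong\alpha_!W\otimes_\mcD X$ for a balanced bifunctor (proved by the same Fubini/Yoneda manipulation you use), and then applies it with the hom-bifunctor as weight: writing $\smallint^\mcC G=\mcC(-,-)\otimes_{\mcC^\op\otimes\mcC}G$, it pushes $\alpha$ back and forth between weight and diagram, passing through the intermediate weight $\mcD(-,\alpha-)$. The net effect is the same, but the paper's route packages the computation as ``move the functor across the coend pairing'' rather than ``expand, swap, contract''. Your approach is arguably more transparent for a reader who thinks in terms of iterated coends; the paper's approach makes the symmetry between the two sides more visible and fits the weighted-colimit language used elsewhere in the paper.
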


\begin{proof}
First we derive the formula for a special case of a $\mcV$-balanced bifunctor
\begin{align*}
W\otimes_\mcC \alpha^*X & =Wc\otimes_{c\in\mcC}X\alpha c\cong Wc\otimes_{c\in\mcC}(\mcD(d,\alpha c)\otimes_{d\in\mcD}Xd) \\
& \cong (Wc\otimes_{c\in\mcC}\mcD(d,\alpha c))\otimes_{d\in\mcD}Xd\cong(\alpha_!W)d\otimes_{d\in\mcD}Xd \\
& =\alpha_!W\otimes_\mcD X.
\end{align*}
This aplies in particular to the tensor action. The remainder of the proof is the computation
\begin{align*}
\smallint\nolimits^\mcC(\alpha\otimes\id)^*F & \cong\mcC(-,-)\otimes_{\mcC^\op\otimes\mcC}(\alpha\otimes\id)^*F \\
& \cong(\alpha\otimes\id)_!\mcC(-,-)\otimes_{\mcD^\op\otimes\mcC}F \\
& \cong\mcD(-,\alpha-)\otimes_{\mcD^\op\otimes\mcC}F \\
& \cong(\id\otimes\alpha)^*\mcD(-,-)\otimes_{\mcD^\op\otimes\mcC}F \\
& \cong\mcD(-,-)\otimes_{\mcD^\op\otimes\mcD}(\id\otimes\alpha)_!F \\
& \cong\smallint\nolimits^\mcD(\id\otimes\alpha)_!F,
\end{align*}
where $((\id\otimes\alpha)_!F)(d',d) \cong \mcD(\alpha-,d) \otimes_\mcC F(d',-)$.
\end{proof}

\section{Lifting properties of cofibrations and fibrations in diagram categories}

For this part, we assume that $\mcM$ is equipped with a reduced cofibration and a reduced fibration structure sharing the same class of weak equivalences and that trivial cofibrations lift against fibrations and cofibrations against trivial fibrations. We call such a structure a \emph{basic model category} (defined in \cite{Weibel}; the name Thomason model category is reserved when $\mcM$ admits functorial factorizations).

It is then rather obvious that cofibrations lift against pointwise trivial fibrations and pointwise trivial cofibrations against fibrations. From this, it easily follows that the diagram category $[\mcC,\mcM]$ equipped with (projective) cofibrations, (injective) fibrations and (pointwise) weak equivalences is a basic model category.

Our aim now is to prove a stronger version where injective fibrations are weakened to pointwise fibrations.

\begin{theorem}
The diagram category $[\mcC,\mcM]$ equipped with projective cofibrations, pointwise fibrations and pointwise weak equivalences is a basic model category.
\end{theorem}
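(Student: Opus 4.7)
The plan is to verify the axioms of a basic model category for $[\mcC,\mcM]$ with projective cofibrations, pointwise fibrations and pointwise weak equivalences: two reduced structures sharing weak equivalences, together with the two mixed lifting properties. The reduced cofibration structure $(\text{projective cofibrations},\text{pointwise weak equivalences})$ is provided by Theorem~\ref{t:projective_structure}. The reduced fibration structure $(\text{pointwise fibrations},\text{pointwise weak equivalences})$ is obtained by dualizing the pointwise cofibration structure on $[\mcC,\mcM^\op]$ (which exists because $\mcM^\op$ inherits a reduced cofibration structure from the fibration part of the basic model category $\mcM$). Both structures share the pointwise weak equivalences by construction.

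For the lifting of projective cofibrations against pointwise trivial fibrations—the first of the two mixed lifting properties—I would argue on the generating set. Given a generating projective cofibration $\mcC_c\otimes M\to\mcC_c\otimes N$ and a pointwise trivial fibration $p\colon P\to Q$, the Yoneda-type adjunction $[\mcC,\mcM](\mcC_c\otimes-,P)\cong\mcM(-,Pc)$ translates a lifting problem into a lifting of $M\to N$ against $p_c$ in $\mcM$, which is solvable since $\mcM$ is a basic model category. The class of maps with the left lifting property against a fixed class is closed under coproducts, pushouts and transfinite composites, so projective cofibrations in general lift against pointwise trivial fibrations.

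For the opposite lifting, projective trivial cofibrations against pointwise fibrations, the key observation is that local flatness of $\mcC$ forces every projective cofibration to be pointwise a cofibration. Indeed, the value at $c'$ of a generating projective cofibration $\mcC_c\otimes M\to\mcC_c\otimes N$ is $\mcC(c,c')\otimes M\to\mcC(c,c')\otimes N$, which is a cofibration in $\mcM$ because $\mcC(c,c')$ is flat so that $\mcC(c,c')\otimes-$ is left Quillen. Closing under coproducts, pushouts and transfinite composites one sees that every projective cofibration is a pointwise cofibration; combined with the fact that weak equivalences are by definition pointwise, this shows that every projective trivial cofibration is in fact a pointwise trivial cofibration. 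The required lifting then follows from the pointwise consequence of the lifting axiom in $\mcM$, namely that pointwise trivial cofibrations lift against pointwise fibrations, which was recorded as ``rather obvious'' in the paragraph preceding the theorem.

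The main obstacle lies in the factorization axiom of the pointwise fibration structure: every map with pointwise fibrant codomain has to be factored as a pointwise weak equivalence followed by a pointwise fibration. The pointwise factorizations from $\mcM$ must be assembled into a natural transformation of diagrams, which is standard but requires some care in the non-functorial setting of a basic model category. A lesser subtlety is the naturality of the pointwise lift in the last step, which is handled by the same argument that underlies the pointwise lifting property asserted above.
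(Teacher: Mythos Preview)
Your reduction of the second lifting property is correct as far as it goes: projective trivial cofibrations are indeed pointwise trivial cofibrations when $\mcC$ is locally flat. But the step you call a ``lesser subtlety'' --- that pointwise trivial cofibrations lift against pointwise fibrations --- is the entire content of the theorem, and it is in fact \emph{false} in general. You have misread the paragraph preceding the theorem: there ``fibrations'' refers to \emph{injective} fibrations, i.e.\ those characterized by the right lifting property against pointwise trivial cofibrations, so that lifting holds tautologically. The point of the present theorem is precisely to pass from injective fibrations to the strictly larger class of pointwise fibrations, and naturality of the pointwise lifts is exactly what fails. For a counterexample take $\mcV=\Set$, $\mcM=\sSet$, and $\mcC$ the one-object category with automorphism group $\bbZ/2$, so that $[\mcC,\mcM]$ is simplicial $\bbZ/2$-sets. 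Let $E$ be a contractible Kan complex with free $\bbZ/2$-action and $i\colon E\hookrightarrow CE$ the inclusion into its cone (cone point fixed); on underlying simplicial sets this is a monomorphism between contractible objects, hence a pointwise trivial cofibration. The map $p\colon E\to *$ is a pointwise fibration. The evident square (identity on top) admits no equivariant diagonal, since the fixed cone point of $CE$ would have to land in the empty fixed-point set of $E$.

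The paper's proof does not attempt your reduction. It first replaces the pointwise fibration by an injective one $\widehat p$, where a strict diagonal does exist, and then uses a homotopy right lifting property (Proposition~\ref{p:homotopy_right_lifting}) to produce a map $d\colon B\to X$ making the triangles commute only up to homotopy. The remainder of the argument strictifies: right homotopies are converted to left homotopies (Proposition~\ref{p:left_right_homotopy}) with respect to specially built cylinders whose inclusions $i_0$ are \emph{good} trivial cofibrations (Lemma~\ref{l:good_frame}), and a homotopy extension argument (Proposition~\ref{p:strictification_of_homotopy_diagonals}) then upgrades $d$ to a strict diagonal. The construction of those cylinders, via the frames from the proof of Theorem~\ref{t:weak_equivalences_generation}, is where the projective --- rather than merely pointwise --- nature of the trivial cofibration $A\to B$ is actually used.
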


\begin{proof}
Obviously, it remains to show that trivial cofibrations lift against pointwise fibrations:
\[\xymatrix{
	A \ar[r]^-f \ar@{ >->}[d]_-i^-\sim & X \ar[d]^-p \\
	B \ar[r]_-g \ar@{-->}[ru] & Y
}\]
We first replace $p$ by an injective fibration $\widehat p \colon \widehat X \to \widehat Y$ so that a diagonal $B \to \widehat X$ exists. Next, using the homotopy right lifting property of Proposition~\ref{p:homotopy_right_lifting}, we find a map $d \colon B \to X$ together with a right homotopy of maps $B \to \widehat X$ in the triangle $BX\widetilde X$ that is constant when restricted to $A$.
\[\xymatrix{
	A \ar[r]^-f \ar@{ >->}[d]_-i^-\sim & X \ar[r]^-\sim \ar[d]_-p \POS[];[rd]**{}?<>(.15)*{\scriptstyle\mathrm{h}} & \widehat X \ar@{->>}[d]^-{\widehat p} \\
	B \ar[r]_-g \ar@{-->}[ru]^-d \ar@{-->}@/_5pt/[rru] & Y \ar[r]_-\sim & \widehat Y
}\]
Next, using Proposition~\ref{p:left_right_homotopy}, we replace this right homotopy by a left homotopy with respect to a ``good'' cylinder of Lemma~\ref{l:good_frame}. Composing with $\widehat p$, we get a left homotopy of maps $B \to \widehat Y$ that gives the bottom map in the following square. The top map consists of a constant homotopy $IA \to Y$ and the map $\partial IB \to Y$ composed of a pair of maps $pd,\,g \colon B \to Y$.
\[\xymatrix{
	\partial IB +_{\partial IB} IA \ar[r] \ar@{ >->}[d] & Y \ar[d]^-\sim \\
	IB \ar[r] \ar@{-->}[ru] & \widehat Y
}\]
Thus, according to Proposition~\ref{p:homotopy_right_lifting} again, a diagonal exists for which the upper triangle commutes strictly and the lower up to a left homotopy. Therefore, this diagonal is a left homotopy between $pd$ and $g$ that is constant when restricted to $A$. Finally, an application of Proposition~\ref{p:strictification_of_homotopy_diagonals} produces a homotopy of $d$ to a strict diagonal in the original square.
\end{proof}

\begin{proposition}\label{p:homotopy_right_lifting}
A map $f \colon X\ra Y$ between fibrant objects is a weak equivalence if and only if it has a homotopy right lifting property with respect to cofibrations.
\end{proposition}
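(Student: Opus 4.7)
The plan is to verify the two implications separately, combining a factor-and-lift tactic for the forward direction with a Whitehead-style argument for the converse.

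For the forward direction, given a weak equivalence $f \colon X \to Y$ between fibrant objects and a lifting square with cofibration $i \colon A \to B$, top $u$ and bottom $v$, I would factor $f = qj$ with $j \colon X \to Z$ a trivial cofibration and $q \colon Z \to Y$ a fibration. Two-out-of-three makes $q$ a trivial fibration and forces $Z$ to be fibrant, so one strictly lifts $v$ along $q$ to obtain $w \colon B \to Z$ with $wi = ju$ and $qw = v$. Lifting $\id_X$ through $j$ against the fibration $X \to \ast$ yields a strict retraction $r \colon Z \to X$, and then $d \defeq rw$ satisfies $di = u$ on the nose. The subtle point is the homotopy $fd \simeq v$ constant on $A$: I would produce it as $q^I \circ H \circ w$, where $H \colon Z \to Z^I$ is a right homotopy from $jr$ to $\id_Z$ whose precomposition with $j$ is the constant path at $j$. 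Such an $H$ is manufactured by a single lifting of the trivial cofibration $j$ against the path object projection $Z^I \to Z \times Z$, with outer square given by the constant path $c \circ j \colon X \to Z^I$ on top and $(jr, \id_Z) \colon Z \to Z \times Z$ on the bottom. Restricting the composite homotopy along $i$ gives the constant path at $vi$ as required.

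For the converse, I would show the Whitehead-style statement that $f_* \colon [C, X] \to [C, Y]$ is bijective for every cofibrant $C$, and deduce from this that $f$ is a weak equivalence. Surjectivity is immediate from the homotopy RLP applied to the cofibration $0 \to C$, with top $0 \to X$ and bottom $g \colon C \to Y$: the resulting $d$ satisfies $fd \simeq g$. For injectivity, given $d_0, d_1 \colon C \to X$ with $fd_0 \simeq fd_1$, I would first convert this right homotopy to a left homotopy $\widetilde H \colon IC \to Y$ on a cylinder $IC$ (this is standard because $C$ is cofibrant and $Y$ is fibrant), and then apply the homotopy RLP to the cofibration $C + C \rightarrowtail IC$ with top $(d_0, d_1)$ and bottom $\widetilde H$. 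The resulting strict upper triangle is exactly a left homotopy from $d_0$ to $d_1$. Combining the resulting bijectivity on homotopy classes with cofibrant replacements of $X$ and $Y$ yields that the induced map between bifibrant versions of $X$ and $Y$ is a homotopy equivalence, hence a weak equivalence, so $f$ itself is a weak equivalence by two-out-of-three.

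The main obstacle is the carefully controlled homotopy in the forward direction: while the strict retraction $r$ is routine, arranging a right homotopy $jr \simeq \id_Z$ whose restriction along $j$ is the constant path requires the specific choice of outer square against the path object $Z^I \to Z \times Z$ described above, and is what forces the final homotopy to be constant on $A$ as demanded by the statement. The converse direction is essentially the textbook Whitehead argument; the only mild complication is that cofibrant replacement in a basic model category need not be functorial, but only the existence of a cofibrant replacement for the single map $f$ is used, and this is provided by the factorization axiom.
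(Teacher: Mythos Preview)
Your argument is essentially correct, but both directions take a longer route than the paper's, and your forward direction has a small gap worth flagging.

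\textbf{Forward direction.} The paper avoids factoring $f$ altogether: it forms the mapping path space $X\times_Y PY$ and observes that the projection $X\times_Y PY\to Y$ is a trivial fibration (using that $f$ is a weak equivalence and $X$, $Y$ are fibrant). A single lift of the cofibration $A\rightarrowtail B$ against this trivial fibration immediately yields the diagonal $d$ together with the required right homotopy $fd\simeq v$, automatically constant on $A$ because $A\to X\to X\times_Y PY$ lands in constant paths. Your approach via the factorization $f=qj$ and a retraction $r$ of $j$ also works, but the step ``$q^I\circ H\circ w$'' tacitly assumes a map of path objects $q^I\colon Z^I\to Y^I$ compatible with the constant-path sections; this is not automatic in a basic model category without functorial path objects. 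It can be produced by choosing $Z\to Z^I$ to be a trivial cofibration and then lifting against the fibration $Y^I\to Y\times Y$, but you should say so. Once $q^I$ is constructed in this way, your homotopy is indeed constant on $A$ for the reason you give.

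\textbf{Converse.} The paper's argument is much shorter than your Whitehead-style one: take a cofibrant replacement $\widetilde X\to X$, factor the composite $\widetilde X\to Y$ as a cofibration $\widetilde X\rightarrowtail\widetilde Y$ followed by a weak equivalence $\widetilde Y\to Y$, and apply the assumed HRLP \emph{once} to the cofibration $\widetilde X\rightarrowtail\widetilde Y$. The resulting diagonal $\widetilde Y\to X$ makes $f$ both split mono and split epi in $\Ho\mcM$, hence an isomorphism there, hence a weak equivalence. Your bijectivity-of-$[C,-]$ argument reaches the same conclusion via Yoneda in $\Ho\mcM$, but needs two separate liftings (for surjectivity and injectivity) plus the identification $[C,X]\cong\Ho\mcM(C,X)$ for cofibrant $C$ and fibrant $X$; the detour through ``bifibrant versions'' and homotopy equivalences is unnecessary since $X$ and $Y$ are already fibrant.

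In summary: your proof is valid, but the paper's is tighter in both halves, and you should patch the construction of $q^I$ if you keep your forward argument.
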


\begin{proof}
Let $\widetilde X\ra X$ be a cofibrant replacement and factor the composition $\widetilde X\ra X\ra Y$ into a cofibration and a weak equivalence as in
\[\xymatrix{
\widetilde X \ar[r]^-\sim \ar@{ >->}[d] & X \ar[d]^f \\
\widetilde Y \ar[r]_-\sim \ar@{-->}[ur] & Y
}\]
By the assumed HRLP, a diagonal exists for which both triangles commute in the homotopy category. Thus, $f$ becomes an isomorphism in the homotopy category and as such has to be a weak equivalence.

In the opposite direction, one uses a path object $Y\ra PY\ra Y\times Y$ to produce a trivial fibration
\[\xymatrix{
A \ar[r] \ar@{ >->}[d] & X \ar[r]^-\sim & X\times_YPY \ar@{->>}[d]^-\sim \ar[r] \POS[];[dr]**{}?(.35)*{\scriptstyle\mathrm h} & X \ar[dl] \\
B \ar[rr] \ar@{-->}[urr] & & Y &
}\]
and the indicated lift exists by the usual lifting properties. The triangle on the right commutes up to a right homotopy.
\end{proof}

\begin{proposition}[homotopy extension property]\label{p:strictification_of_homotopy_diagonals}
Assume that in the diagram
\[\xymatrix{
A \ar[r] \ar@{ >->}[d] & X \ar@{->>}[d] \\
B \ar[r] \ar@{-->}[ur] & Y
}\]
there exists a diagonal for which the lower triangle commutes strictly and the upper triangle commutes up to a fibrewise homotopy (a homotopy constant when projected to $Y$). Then there exists a strict diagonal filler.
\end{proposition}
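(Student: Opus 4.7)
The plan is to treat this as a homotopy extension problem: I extend the given fibrewise homotopy, which a priori lives only on $A$, to a fibrewise homotopy on all of $B$ starting at $d$. The endpoint of the extended homotopy will then automatically be the desired strict diagonal filler.

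Concretely, let $H \colon IA \to X$ be the given fibrewise homotopy with $H|_{\{0\}A} = di$ and $H|_{\{1\}A} = f$, using a good cylinder $IA$ on $A$ in the sense of Lemma~\ref{l:good_frame}; by hypothesis, $pH = gi \circ \pi$ is the constant homotopy at $gi$. I would choose a compatible good cylinder $IB$ on $B$, i.e.\ one admitting a map $IA \to IB$ extending $i \colon A \to B$ at both endpoints, which can be obtained by factoring the canonical map from $B \sqcup_{A \sqcup A} IA$ to $B$ into a cofibration followed by a weak equivalence. Then consider the lifting problem
\[\xymatrix{
\{0\}B \cup_{\{0\}A} IA \ar[r]^-{(d,H)} \ar[d]_-j & X \ar@{->>}[d]^-p \\
IB \ar[r]_-{g\pi} \ar@{-->}[ur]^-{\widehat H} & Y
}\]
whose top map is consistent because $H|_{\{0\}A} = di$ and whose bottom is the constant homotopy at $g$, consistent because $pd = g$ and $pH = gi\circ\pi$. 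Given a diagonal $\widehat H$, I set $d' := \widehat H|_{\{1\}B}$; then $pd' = g$ reads off directly from the bottom composite, while $d' i = H|_{\{1\}A} = f$ by restricting the top map along $\{1\} \colon A \to IA$, so $d'$ is the required strict diagonal filler.

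The main obstacle is proving that $j$ is a trivial cofibration, so that the diagonal $\widehat H$ is furnished by the lifting axiom of the basic model category. This is a Leibniz/pushout-product statement: $j$ is the pushout-product of the cofibration $i \colon A \rightarrowtail B$ with the endpoint inclusion $\{0\} \colon A \to IA$ of a good cylinder, where the latter is a trivial cofibration. I would discharge this by unfolding the good cylinder axioms of Lemma~\ref{l:good_frame} (that $\{0\}, \{1\} \colon A \to IA$ are trivial cofibrations and $\partial IA \to IA$ is a cofibration) together with the closure properties of cofibrations in $\mcM$; the only subtle point is arranging the cylinder $IB$ on $B$ so that $\{0\} \colon B \to IB$ extends $\{0\} \colon A \to IA$ and retains the trivial cofibration property after pushout along $i$, which the factorization construction above ensures. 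Once $j$ is known to be a trivial cofibration, the argument concludes as indicated.
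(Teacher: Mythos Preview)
Your approach is the paper's: form the lifting problem with $j \colon B \sqcup_A IA \to IB$ on the left against the fibration $p$, lift, and restrict to the far endpoint; the paper's proof is literally your diagram with no commentary. Note that Lemma~\ref{l:good_frame} already supplies \emph{both} cylinders together with the fact that $j$ is a trivial cofibration (it is precisely the pushout corner map appearing there), so your separate construction of $IB$ and the pushout-product discussion are unnecessary detours---and incidentally your $B \sqcup_{A \sqcup A} IA$ is missing a copy of $B$; it should be $(B+B) \sqcup_{A+A} IA$.
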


\begin{proof}
This is easy
\[\xymatrixb{
A \ar[r] \ar@{ >->}[d] & IA\sqcup_AB \ar[r] \ar@{ >->}[d]_\sim & X \ar@{->>}[d] \\
B \ar[r] & IB \ar[r] \ar@{-->}[ur] & Y
}{
B \ar[r] & IB \ar[r] & Y
}\qedhere\]
\end{proof}

\begin{proposition}\label{p:left_right_homotopy}
Let $A \rightarrowtail B$ be a strong cofibration and $X$ a fibrant object. Then two maps $f_0,\,f_1 \colon B\ra X$ are left homotopic relative to $A$ iff they are right homotopic relative to $A$.
\end{proposition}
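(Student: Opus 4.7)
The plan is to execute the standard Quillen--Hovey conversion between left and right homotopies, paying careful attention to the compatibility with $A$. I would use a good relative cylinder on $B$ and a path object on $X$ (available because $X$ is fibrant) and exploit the basic model category's lifting axioms.

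First I would fix a factorization of the codiagonal $B \sqcup_A B \rightarrowtail IB \xrightarrow{p} B$ into a strong cofibration followed by a weak equivalence; the pushout $B \sqcup_A B$ is cofibrant because $A \rightarrowtail B$ is a strong cofibration. Dually, since $X$ is fibrant and hence so is $X \times X$, I would factor the diagonal as $X \xrightarrow{s} PX \twoheadrightarrow X \times X$ with $s$ a weak equivalence and $(p_0, p_1)$ a fibration. Let $i_0, i_1 \colon B \to IB$ denote the two endpoint inclusions and $p_0, p_1 \colon PX \to X$ the two evaluations. Each $i_j$ is a trivial cofibration: it is a cofibration as a composite of the pushout $B \to B \sqcup_A B$ (pushout of $A \rightarrowtail B$ along itself) with the cofibration $B \sqcup_A B \rightarrowtail IB$, and a weak equivalence by 2-out-of-3 applied to $p \circ i_j = \id_B$; dually each $p_j$ is a trivial fibration. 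The key observation for the relative structure is that $i_0|_A$ and $i_1|_A$ coincide as a single map $i_A \colon A \to IB$, because the two copies of $A$ in $B \sqcup_A B$ are identified.

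For the direction $(\Rightarrow)$, given a left homotopy $H \colon IB \to X$ rel $A$ from $f_0$ to $f_1$, I would solve the lifting problem
\[
\xymatrix{
B \ar[r]^-{s f_0} \ar@{ >->}[d]_-{i_0}^-\sim & PX \ar@{->>}[d]^-{(p_0, p_1)} \\
IB \ar[r]_-{(f_0 p,\, H)} \ar@{-->}[ur]^-{K'} & X \times X
}
\]
which commutes since both composites equal $(f_0, f_0)$; the lift exists because the left map is a trivial cofibration and the right a fibration. Setting $K = K' \circ i_1 \colon B \to PX$, one checks $(p_0 K, p_1 K) = (f_0 p i_1, H i_1) = (f_0, f_1)$, and $K|_A = K' \circ i_A = K' \circ i_0|_A = (s f_0)|_A$ is the constant path at $f_0|_A$, so $K$ is a right homotopy rel $A$. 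For the converse $(\Leftarrow)$ I would solve the dual square
\[
\xymatrix{
B \sqcup_A B \ar[r]^-{(s f_0,\, K)} \ar@{ >->}[d] & PX \ar@{->>}[d]^-{p_0}_-\sim \\
IB \ar[r]_-{f_0 \circ p} \ar@{-->}[ur]^-{\tilde K} & X
}
\]
(which commutes for the same reason) and set $H = p_1 \circ \tilde K$; an analogous verification shows $H$ is a left homotopy rel $A$.

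No step involves deep homotopical machinery beyond cylinder/path objects and lifting properties. The only real care required is the bookkeeping in the relative setting: confirming that the endpoint inclusions $B \to IB$ are trivial cofibrations (so that the pushout $B \to B \sqcup_A B$ of the strong cofibration $A \rightarrowtail B$ is again a strong cofibration) and that their restrictions to $A$ agree as $i_A$. Once these facts are in hand, the compatibility of the lift with $A$ is automatic from how the lifting data are chosen, and no further extension or auxiliary lifting step is needed.
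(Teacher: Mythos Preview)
Your argument is correct and is essentially the standard Quillen--Hovey conversion that the paper also uses; the only organizational difference is that the paper works with an absolute cylinder $IB$ together with a compatible $IA$ and lifts from $\partial IB +_{\partial IA} IA$ against the trivial fibration $p_1$, whereas you build the relative cylinder directly from $B\sqcup_A B$ so that the ``rel $A$'' condition is automatic. Both setups yield the same lifting-against-a-trivial-fibration step, so the approaches coincide in substance.
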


\begin{proof}
Let $F \colon B \ra PX$ be a right homotopy whose restriction to $A$ is a constant homotopy at $g \colon A \to X$. We consider
\[\xymatrix{
\partial IB +_{\partial IA} IA \ar[rr]^-{[[F,if_1],igp]} \ar@{ >->}[d] & & PX \ar@{->>}[d]^-{p_1}_\sim \ar[r]^-{p_0} & X \\
IB \ar[r]_p \ar@{-->}[urr]^H & B \ar[r]_-{f_1} & X
}\]
The required left homotopy is the composition $p_0H$ where $H$ is any diagonal in the above square.
\end{proof}

\begin{lemma}\label{l:good_frame}
For every strong cofibration $A \ra B$ in $[\mcC,\mcM]$, there exist cylinders $IA$ and $IB$ such that the pushout corner map in
\[\xymatrix{
A \ar[r]^-{i_0} \ar@{ >->}[d] & IA \ar@{ >->}[d] \\
B \ar[r]_-{i_0} & IB
}\]
is a good trivial cofibration.
\end{lemma}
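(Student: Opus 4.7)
The plan is to prove the lemma by cellular induction on the strong cofibration $A \to B$, using the generating presentation from Proposition~\ref{p:generating_strong_cofibrations}. I would write $A\to B$ as a transfinite composite $A = B_0 \to B_1 \to \cdots$ of pushouts of coproducts of generators $\mcC_c \otimes M \to \mcC_c \otimes N$ with $M \to N$ a strong cofibration in $\mcM$, and construct compatible cylinders $IA \to IB_\alpha$ along this presentation whose $i_0$-pushout corners are good trivial cofibrations in $[\mcC,\mcM]$.

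The core building block is the generating case $\mcC_c \otimes M \to \mcC_c \otimes N$: pick any cylinder $IM$ on $M$ in $\mcM$ and construct a compatible cylinder $IN$ on $N$ by forming the pushout $Q = IM \sqcup_{M\sqcup M} (N \sqcup N)$, gluing the two ends of $IM$ to two copies of $N$ via $M \to N$, and factoring the canonical map $Q \to N$ (which combines $IM \xrightarrow{\sim} M \to N$ with the fold $N \sqcup N \to N$) as $Q \hookrightarrow IN \xrightarrow{\sim} N$. The $i_0$-pushout corner $IM \sqcup_M^{i_0} N \to IN$ is then a cofibration in $\mcM$ by construction, and a weak equivalence by 2-out-of-3 applied to its composite with $IN \xrightarrow{\sim} N$, using that $N \to IM \sqcup_M^{i_0} N$ is a trivial cofibration as the pushout of the trivial cofibration $i_0 \colon M \to IM$. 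Hence it is a trivial cofibration in $\mcM$, and tensoring with $\mcC_c$ produces a generating good trivial cofibration in $[\mcC,\mcM]$ of the required form.

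For the inductive step, given cylinders $IA \to IB_\alpha$ satisfying the conclusion relative to $A \to B_\alpha$ and a pushout attaching the cell $\mcC_c \otimes M \to \mcC_c \otimes N$ along some $\mcC_c \otimes M \to B_\alpha$, I would form $IB_{\alpha+1}$ as the pushout $IB_\alpha \sqcup_{\mcC_c \otimes IM} \mcC_c \otimes IN$ along a compatible attaching map $\mcC_c \otimes IM \to IB_\alpha$ extending the cell attachment through $i_0$. The resulting object is then a cylinder on $B_{\alpha+1}$ (its structure maps extend by the universal property of the pushout and 2-out-of-3), and the new $i_0$-pushout corner is a pushout of the generating good trivial cofibration produced above along the inductive data, hence itself a good trivial cofibration. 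Transfinite composites are handled by taking colimits of the chain of cylinders and invoking closure of good trivial cofibrations under transfinite composition.

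The main obstacle is producing the compatible attaching map $\mcC_c \otimes IM \to IB_\alpha$ at each cellular step, since the existence of such an extension along a weak equivalence is not automatic in a general cofibration category. I would resolve this either by inductively enlarging $IB_\alpha$ to accommodate a chosen lift, absorbing an auxiliary cylindrical cell into $IB_\alpha$ before the next attachment, or, in the spirit of the bar construction from the proof of Theorem~\ref{t:weak_equivalences_generation}, by producing all the compatible cylinders uniformly as the degree-$1$ component of a suitable Reedy cofibrant frame on the arrow $f \colon A \to B$ viewed in $\cosimp[\mcC,\mcM]^{[1]}$, transferring the goodness of the generating pushout corners through the Reedy structure via Proposition~\ref{p:coend_Reedy}.
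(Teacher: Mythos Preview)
Your overall strategy---cellular induction along a presentation of the strong cofibration, with compatible cylinders built step by step---is exactly the paper's approach, and your analysis of the generating case is correct. You also correctly identify the genuine obstacle: producing a compatible attaching map $\mcC_c \otimes IM \to IB_\alpha$ at each stage.

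However, your proposed resolutions miss the simple fix that the paper uses. The ``frame on the arrow $f$'' idea does not obviously work: a generic Reedy cofibrant frame on $f$ in $\cosimp[\mcC,\mcM]^{[1]}$ will give a relative latching map at $[1]$ that is a trivial projective cofibration, but there is no reason it should be a \emph{good} trivial cofibration---that distinction is the whole point of the lemma. Your other suggestion, enlarging $IB_\alpha$ to absorb a lift, is vague and would likely run into the same issue.

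The paper's resolution is clean and already implicit in your reference to Proposition~\ref{p:generating_strong_cofibrations}: by the proof of that proposition, one may take the attaching cell so that $M = B_\alpha c$. Working with full frames rather than bare cylinders (so $IB_\alpha = (\widetilde B_\alpha)^1$), one then sets $\widetilde M = (\widetilde B_\alpha)c$, and the attaching map $\mcC_c \otimes \widetilde M \to \widetilde B_\alpha$ is simply the counit---no lifting is needed. This is precisely the inductive frame construction carried out in the proof of Theorem~\ref{t:weak_equivalences_generation}, which the paper's proof of the lemma explicitly invokes. With this choice, the left face of the resulting cube has pushout corner map of the form $\mcC_c \otimes (\text{trivial cofibration in }\mcM)$, hence a generating good trivial cofibration, and the right face inherits this as a pushout.
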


\begin{proof}
The cylinders are constructed from frames as $IA = \widetilde A^1$. Let $B$ be a transfinite composite of a chain $B_n$ with $B_{n-1} \to B_n$ obtained as a pushout of $\mcC_c \otimes M \to \mcC_c \otimes N$. Then, using the cylinders constructed in the course of the proof of Theorem~\ref{t:weak_equivalences_generation}, we have the following diagram
\[\xymatrix@!=1pc{
& \mcC_c\otimes \widetilde M^1 \ar[rr] \ar[dd]|!{[dl];[dr]}\hole & & (\widetilde B_{n-1})^1 \ar[dd] \\
\mcC_c\otimes \widetilde M^0 \ar[rr] \ar[dd] \ar[ur] & & (\widetilde B_{n-1})^0 \ar[dd] \ar[ur] & \\
& \mcC_c\otimes \widetilde N^1 \ar[rr]|!{[ur];[dr]}\hole & & (\widetilde B_n)^1 \poxy & \\
\mcC_c\otimes \widetilde N^0 \ar[rr] \ar[ur] & & (\widetilde B_n)^0 \ar[ur] \poxy
}\]
Since the pushout corner map of the left face is a good trivial cofibration, the same is true for the right face (as it is a horizontal pushout of the left face). This completes the inductive step.
\end{proof}

\vskip 20pt
\vfill
\vbox{\footnotesize%
\noindent\begin{minipage}[t]{0.5\textwidth}
{\scshape
Luk\'a\v{s} Vok\v{r}\'inek}
\vskip 2pt
Department of Mathematics and Statistics,\\
Masaryk University,\\
Kotl\'a\v{r}sk\'a~2, 611~37~Brno,\\
Czech Republic
\vskip 2pt
\url{koren@math.muni.cz}
\end{minipage}
}


\begin{thebibliography}{CP}


\bibitem[CS]{ChacholskiScherer}
W.~Chachólski, J.~Scherer.
\emph{Homotopy theory of diagrams}.
Vol.~736, American Mathematical Soc., 2002.

\bibitem[GJ]{GoerssJardine}
P.~G.~Goerss, J.~F.~Jardine.
\emph{Simplicial homotopy theory}.
Birkh\"auser Verlag, 1999.

\bibitem[Hi]{Hirschhorn}
P.~S.~Hirschhorn.
\emph{Model categories and their localizations}.
American Mathematical Society, 2003.

\bibitem[Ho]{Hovey}
M.~Hovey.
\emph{Model categories}.
No. 63. American Mathematical Soc., 2007.

\bibitem[Ke]{Kelly}
G.~M.~Kelly.
\emph{Basic concepts of enriched category theory}.
No. 64. CUP Archive, 1982.

\bibitem[RB]{Radulescu-Banu}
A.~Radulescu-Banu.
\emph{Cofibrations in homotopy theory}.
\url{arXiv:math/0610009}.

\bibitem[We]{Weibel}
C.~Weibel.
\emph{Homotopy ends and Thomason model categories}.
Selecta Mathematica, New Series 7.4 (2001): 533--564.

\end{thebibliography}
\end{document}